\theoremstyle{plain}
\newtheorem{theorem}{Theorem}[section]
\newtheorem{main}{Theorem} 
\newtheorem{proposition}[theorem]{Proposition}
\newtheorem{lemma}[theorem]{Lemma}
\newtheorem{corollary}[theorem]{Corollary}
\theoremstyle{definition}
\newtheorem{definition}[theorem]{Definition}
\newtheorem{remark}[theorem]{Remark}
\numberwithin{table}{section} \numberwithin{figure}{section}
\numberwithin{equation}{section}
\DeclareMathOperator{\id}{id} \DeclareMathOperator{\D}{D}
\DeclareMathOperator{\GL}{GL} \DeclareMathOperator{\PGL}{PGL}
\DeclareMathOperator{\PO}{PO} \DeclareMathOperator{\orthogonal}{O}
\DeclareMathOperator{\SO}{SO} 
\DeclareMathOperator{\res}{res} \DeclareMathOperator{\Iso}{Iso}
\DeclareMathOperator{\Irr}{Irr} \DeclareMathOperator{\Vect}{Vect}
\DeclareMathOperator{\Rep}{Rep}
\DeclareMathOperator{\RP}{\mathbf{RP}} \DeclareMathOperator{\pr}{pr}
\newcommand{\complex}[1]{\mathcal{#1}}
\newcommand{\complexK}{\complex{K}}
\newcommand{\tetra}{\mathrm{T}}
\newcommand{\octa}{\mathrm{O}}
\newcommand{\icosa}{\mathrm{I}}
\newcommand{\vect}{\mathrm{vect}}
\newcommand{\lineK}{{\bar{\complexK}}}
\newcommand{\hatK}{{\hat{\complexK}}}
\newcommand{\field}[1]{\mathbb{#1}}
\newcommand{\R}{\field{R}}
\newcommand{\Z}{\field{Z}}
\begin{document}

\title[equivariant vector bundles over real projective plane]{Classification of equivariant vector bundles over real projective plane}
\author{Min Kyu Kim}
\address{Department of Mathematics Education,
Gyeongin National University of Education, San 59-12, Gyesan-dong,
Gyeyang-gu, Incheon, 407-753, Korea}

\email{mkkim@kias.re.kr}

\subjclass[2000]{Primary 57S25, 55P91 ; Secondary 20C99}

\keywords{equivariant vector bundle, equivariant homotopy, representation}

\begin{abstract}
We classify equivariant topological complex vector bundles over real
projective plane under a compact Lie group (not necessarily
effective) action. It is shown that nonequivariant Chern classes and
isotropy representations at (at most) three points are sufficient to
classify equivariant vector bundles over real projective plane
except one case. To do it, we relate the problem to classification
on two-sphere through the covering map because equivariant vector
bundles over two-sphere have been already classified.
\end{abstract}

\maketitle

\section{Introduction} \label{section: introduction}

In the previous paper \cite{Ki}, we have classified equivariant
topological complex vector bundles over $S^2.$ In this paper, we
classify equivariant topological complex vector bundles over real
projective plane. This explains for one of two exceptional cases of
\cite{Ki}.

We will consider only projective linear actions on real projective
plane. We explain for this. Real projective plane $\RP^2$ is defined
as $\R^3 \setminus (0,0,0) \Big/ \sim$ where the equivalence
relation $\sim$ is given by
\begin{equation*}
(x, y, z) \sim (\lambda x, \lambda y, \lambda z)
\end{equation*}
for nonzero real number $\lambda,$ and let
\begin{equation*}
o : S^2 \rightarrow \RP^2
\end{equation*}
be the usual covering map. The projective linear group $\PGL(3, \R)$
is defined as the quotient group $\GL(3, \R) \Big/ Z \big( \GL(3,
\R) \big)$ where $Z \big( \GL(3, \R) \big)$ is the center of $\GL(3,
\R),$ and let
\begin{equation*}
o_{\GL(3, \R)} : \GL(3, \R) \rightarrow \PGL(3, \R)
\end{equation*}
be the qutient map. Let $\PGL(3, \R)$ act usually on $\RP^2.$ For a
compact Lie group $G,$ a topological $G$-action on $\RP^2$ is called
\textit{projective linear} if the action is given by a continuous
homomorphism $\rho : G \rightarrow \PGL(3, \R).$ Since $\PO(3, \R) =
o_{\GL(3, \R)} \big( \orthogonal(3) \big)$ is a maximal compact
subgroup of $\PGL(3, \R),$ we henceforward may assume
\begin{equation}
\label{equation: assumption on rho} \rho(G) \subset \PO(3, \R).
\end{equation}
In Section \ref{section: closed subgroups}, we show that a
topological action on $\RP^2$ by a compact Lie group is conjugate to
a projective linear action.

To state main results, we need introduce some terminologies and
notations. Let a compact Lie group $G$ act projective linearly (not
necessarily effectively) on $\RP^2$ through a homomorphism $\rho : G
\rightarrow \PGL(3, \R)$ to satisfy (\ref{equation: assumption on
rho}). Let $\Vect_G (\RP^2)$ be the set of isomorphism classes of
topological complex $G$-vector bundles over $\RP^2.$ For a bundle
$E$ in $\Vect_G (\RP^2)$ and a point $x$ in $\RP^2,$ denote by $E_x$
the isotropy $G_x$-representation on the fiber at $x.$ Put $H = \ker
\rho,$ i.e. the kernel of the $G$-action on $\RP^2.$ Let $\Irr(H)$
be the set of characters of irreducible complex $H$-representations
which has a $G$-action defined as
\begin{equation*}
(g \cdot \chi) (h) = \chi (g^{-1} h g)
\end{equation*}
for $\chi \in \Irr(H),$ $g \in G,$ $h \in H.$ For $\chi \in
\Irr(H),$ an $H$-representation is called $\chi$-isotypical if its
character is a multiple of $\chi.$ We slightly generalize this
concept. For $\chi \in \Irr(H)$ and a compact Lie group $K$
satisfying $H \lhd K < G$ and $K \cdot \chi = \chi,$ a
$K$-representation $W$ is called $\chi$-\textit{isotypical} if
$\res_H^K W$ is $\chi$-isotypical, and we denote by $\Vect_K (\RP^2,
\chi)$ the set
\begin{equation*}
\Big\{ ~ [E] \in \Vect_K (\RP^2) ~ \big| ~ E_x \text{ is }
\chi\text{-isotypical for each } x \in S^2 ~ \Big\}
\end{equation*}
where $\RP^2$ delivers the restricted $K$-action. As in \cite{Ki},
our classification is reduced to $\Vect_{G_\chi} (\RP^2, \chi)$ for
each $\chi \in \Irr(H).$ Details are found in \cite[Section
2]{CKMS}.

The classification of $\Vect_{G_\chi} (\RP^2, \chi)$ is highly
dependent on the $G_\chi$-action on the base space $\RP^2.$ So, we
would list all possible actions on $\RP^2$ up to conjugacy in terms
of covering action, and then assign an equivariant simplicial (or
CW) complex structure on $\RP^2$ to each action when $\rho(G_\chi)$
is finite. For these, we first introduce some polyhedra in $\R^3.$
Let $P_m$ for $m \ge 3$ be the regular $m$-gon on $xy$-plane in
$\R^3$ whose center is the origin and one of whose vertices is
$(1,0,0).$ And,
\begin{enumerate}
  \item $| \complexK_m |$ is defined as the boundary
  of the convex hull of $P_m,$ $S=(0,0,-1),$
  $N=(0,0,1),$
  \item $| \complexK_\tetra |$ is defined as
  the regular tetrahedron which is the boundary of the
  convex hull of four points
  $(\frac {1} {3}, \frac {1} {3}, \frac {1} {3}),$
  $(-\frac {1} {3}, -\frac {1} {3}, \frac {1} {3}),$
  $(-\frac {1} {3}, \frac {1} {3}, -\frac {1} {3}),$
  $(\frac {1} {3}, -\frac {1} {3}, -\frac {1} {3}),$
  and which is inscribed to $| \complexK_4 |,$
  \item $| \complexK_\icosa |$ is defined as
  a regular icosahedron which has the origin as the center.
\end{enumerate}
With these, denote natural simplicial complex structures on $|
\complexK_m |,$ $| \complexK_\tetra |,$ $| \complexK_\icosa |$ by
$\complexK_m,$ $\complexK_\tetra,$ $\complexK_\icosa,$ respectively.
It is well-known that each closed subgroup of $\SO(3)$ is conjugate
to one of the following subgroups \cite[Theorem 11]{R}:
\begin{enumerate}
  \item $\Z_n$ generated by the rotation $a_n$ through the angle
  $2\pi/n$ around $z$-axis,
  \item $\D_n$ generated by $a_n$ and the rotation $b$ through
  the angle $\pi$ around $x$-axis,
  \item the tetrahedral group $\tetra$ which is the rotation group
  of $| \complexK_\tetra |,$
  \item the octahedral group $\octa$ which is the rotation group
  of $| \complexK_4 |,$
  \item the icosahedral group $\icosa$ which is the rotation group
  of $| \complexK_\icosa |,$
  \item $\SO(2)$ which is the set of rotations around $z$-axis,
  \item $\orthogonal(2)$ which is defined as $\langle \SO(2), b \rangle,$
  \item $\SO(3)$ itself.
\end{enumerate}
Let $\imath: \SO(3) \rightarrow \PO(3) \subset \PGL(3, \R)$ be the
monomorphism defined as
\begin{equation*}
o_{\GL(3, \R)} \circ i_{\GL(3, \R)} ~ \big|_{\SO(3)}
\end{equation*}
where $i_{\GL(3, \R)} : \orthogonal (3) \rightarrow \GL(3, \R)$ is
the usual inclusion. Put
\begin{equation*}
\bar{\rho} : G_\chi \longrightarrow \SO(3), \quad g \longmapsto
\imath^{-1} \big( \rho (g) \big)
\end{equation*}
for $g \in G_\chi$ which is defined by (\ref{equation: assumption on
rho}). And, define the homomorphism
\begin{equation*}
\hat{\rho} : G_\chi \times Z \longrightarrow \orthogonal(3), \quad
(g, g_0^j) \longmapsto \bar{\rho}(g) \cdot g_0^j
\end{equation*}
for $g \in G_\chi,$ $j \in \Z_2$ where $Z$ is the centralizer $\{
\id, -\id \}$ of $\orthogonal(3)$ and we denote $-\id \in Z$ by
$g_0$ to avoid symbolic confusion. In Section \ref{section: closed
subgroups}, it is shown that each compact subgroup of $\PGL(3, \R)$
is conjugate to $\imath (\bar{R})$ for an $\bar{R}$-entry of Table
\ref{table: introduction}. Denote $\imath (\bar{R})$ by $R,$ and put
$\hat{R} = \bar{R} \times Z$ where the notation $\times$ means the
internal direct product of two subgroups in $\orthogonal(3).$
Henceforward, it is assumed that $\rho(G_\chi) = R$ for some
$\bar{R}.$ Then, images of $\rho,$ $\bar{\rho},$ $\hat{\rho}$ are
$R,$ $\bar{R},$ $\hat{R},$ respectively. Note that the case of
$\bar{R} = \Z_2$ is conjugate to $\bar{R} = \D_1.$ So, we exclude
$\D_1$ in Table \ref{table: introduction}. To each finite $\bar{R},$
we assign a simplicial complex $\hatK_{\hat{R}}$ of Table
\ref{table: introduction} where $\complexK_\octa$ is defined in the
below. The reason for our choice of $\hatK_{\hat{R}}$ is explained
in Section \ref{section: relation RP^2 and S^2}. Let
$\orthogonal(3)$ and their subgroups act usually on $\R^3.$ Then,
the underlying space $|\hatK_{\hat{R}}|$ of $\hatK_{\hat{R}}$ is
invariant under the $\hat{R}$-action on $\R^3$ so that
$|\hatK_{\hat{R}}|$ inherits the $\hat{R}$-action from which
$\hatK_{\hat{R}}$ also carries the induced $\hat{R}$-action. And,
$\hatK_{\hat{R}}$ and $|\hatK_{\hat{R}}|$ deliver the $( G_\chi
\times Z )$-action through $\hat{\rho}.$ These actions induce $R$-
and $G_\chi$-actions on $|\hatK_{\hat{R}}|/Z.$ Then, $( G_\chi
\times Z )$- and $G_\chi$- actions on $|\hatK_{\hat{R}}|$ and
$|\hatK_{\hat{R}}|/Z$ are equal to $( G_\chi \times Z )$- and
$G_\chi$- actions on $S^2$ and $\RP^2$ when we regard
$|\hatK_{\hat{R}}|$ and $|\hatK_{\hat{R}}|/Z$ as $S^2$ and $\RP^2,$
respectively. And, we regard $o$ as the orbit map from
$|\hatK_{\hat{R}}|$ to $|\hatK_{\hat{R}}|/Z.$ We can give a natural
equivariant simplicial (or CW) complex structure on
$|\hatK_{\hat{R}}|/Z$ so that $o$ is an equivariant cellular map.
Sometimes, we consider the restricted $G_\chi$-action on $S^2$ and
$|\hatK_{\hat{R}}|.$

\begin{table}[ht]
{\footnotesize
\begin{tabular}{l||c|c|c}
  % after \\: \hline or \cline{col1-col2} \cline{col3-col4} ...
$\bar{R}$                 & $\hatK_{\hat{R}}$    & $\hat{D}_{\hat{R}}$     & $\hat{d}^{-1}$  \\

\hhline{=#=|=|=}

$\Z_n,$ odd $n$           & $\complexK_{2n}$     & $|e^0|$                 & $S$             \\
$\Z_n,$ even $n$          & $\complexK_n$        & $|e^0|$                 & $S$             \\
$\D_n,$ odd $n,$ $n>1$    & $\complexK_{2n}$     & $[v^0, b(e^0)]$         & $S$             \\
$\D_n,$ even $n$          & $\complexK_n$        & $[v^0, b(e^0)]$         & $S$             \\

\hline

$\tetra$                  & $\complexK_\octa$    & $|e^0|$                 & $b(f^{-1})$        \\
$\octa$                   & $\complexK_\octa$    & $[v^0, b(e^0)]$         & $b(f^{-1})$        \\
$\icosa$                  & $\complexK_\icosa$   & $[v^0, b(e^0)]$         & $b(f^{-1})$        \\

\hline

$\SO(2)$                  &                      & $\{ v^0 \}$             & $S$             \\
$\orthogonal(2)$          &                      & $\{ v^0 \}$             & $S$             \\
$\SO(3)$                  &                      & $\{ S \}$               & $S$             \\

\end{tabular}}
\caption{\label{table: introduction} $\hatK_{\hat{R}},$
$\hat{D}_{\hat{R}},$ $\hat{d}^{-1}$ for each $\bar{R}$}
\end{table}

\begin{figure}[ht!]
\begin{center}
\mbox{ \subfigure[$\complexK_4$]{
\begin{pspicture}(-2.5,-3)(2.5,3)\footnotesize

\psline[linewidth=0.5pt](0,2.5)(-2,-0.5)(0,-2.5)(2,0.5)(0,2.5)
\psline[linewidth=0.5pt](-2,-0.5)(1,-0.5)(2,0.5)
\psline[linewidth=0.5pt](0,2.5)(1,-0.5)(0,-2.5)
\psline[linewidth=0.5pt, linestyle=dotted](-2,-0.5)(-1,0.5)(2,0.5)
\psline[linewidth=0.5pt, linestyle=dotted](0,2.5)(-1,0.5)(0,-2.5)

\uput[l](-1.9,-0.5){$v^0$} \uput[ul](1,-0.5){$v^1$}
\uput[ur](1.9,0.5){$v^2$} \uput[ur](-0.9, 0.5){$v^3$}
\uput[d](0,-2.5){$S$} \uput[u](0,2.5){$N$}

\uput[d](-0.4,-0.4){$e^0$} \uput[l](1.7,0.1){$e^1$}
\uput[dl](0.5,0.5){$e^2$} \uput[r](-1.5,0){$e^3$}

\end{pspicture}
}

\subfigure[$\complexK_\octa$]{
\begin{pspicture}(-2.5,-3)(2.5,3)\footnotesize

\psline[linewidth=0.5pt](0,2.5)(-2,-0.5)(0,-2.5)(2,0.5)(0,2.5)
\psline[linewidth=0.5pt](-2,-0.5)(1,-0.5)(2,0.5)
\psline[linewidth=0.5pt](0,2.5)(1,-0.5)(0,-2.5)
\psline[linewidth=0.5pt, linestyle=dotted](-2,-0.5)(-1,0.5)(2,0.5)
\psline[linewidth=0.5pt, linestyle=dotted](0,2.5)(-1,0.5)(0,-2.5)

\uput[l](-1.9,-0.5){$v^0$} \uput[ul](1,-0.5){$v^1$}
\uput[d](0,-2.5){$v^2$}

\uput[d](-0.4,-0.4){$e^0$} \uput[ul](0.6,-1.5){$e^1$}
\uput[dl](-0.9,-1.5){$e^2$}

\uput[ul](0.2,-1.7){$f^{-1}$} \uput[dr](1.1,-0.8){$f^1$}
\uput[ul](0.1,0.3){$f^0$}

\end{pspicture} } }
\end{center}
\caption{\label{figure: introduction} $\complexK_4$ and
$\complexK_\octa$}
\end{figure}

In dealing with equivariant vector bundles over $\RP^2,$ we need to
consider isotropy representations at (at most) three points. We will
express those points as the image of some points in $S^2$ under $o.$
To specify those points, we introduce some more notations. When $m
\ge 3,$ denote by $v^i$ the vertex $\exp \Big( \frac {2 \pi i
\sqrt{-1} ~} m \Big)$ of $\complexK_m,$ and by $e^i$ the edge of
$\complexK_m$ connecting $v^i$ and $v^{i+1}$ for $i \in \Z_m.$ These
notations are illustrated in Figure \ref{figure: introduction}.(a).
When we use the notation $\Z_m$ to denote an index set, it is just
the group $\Z / m\Z$ of integers modulo $m.$ In \cite{Ki},
$\complexK_m$ and its $v^i$'s, $e^i$'s for $m=2$ are also defined.
We would define similar notations for $\complexK_\tetra$ and
$\complexK_\icosa.$ For $\complexK_\tetra$ and $\complexK_\icosa,$
pick two adjacent faces in each case, and call them $f^{-1}$ and
$f^0.$ And, label vertices of $f^{-1}$ as $v^i$ for $i \in \Z_3$ to
satisfy
\begin{enumerate}
  \item $v^0,$ $v^1,$ $v^2$ are arranged
  in the clockwise way around $f^{-1},$
  \item $v^0,$ $v^1$ are contained in $f^{-1} \cap f^0.$
\end{enumerate}
For $i \in \Z_3,$ denote by $e^i$ be the edge connecting $v^i$ and
$v^{i+1},$ and by $f^i$ the face which is adjacent to $f^{-1}$ and
contains the edge $e^i.$ We distinguish the superscripts $-1$ and
$2$ only for $f^i,$ i.e. $f^{-1} \ne f^2$ in contrast to $v^{-1} =
v^2,$ $e^{-1} = e^2.$ Here, we define one more simplicial complex
denoted by $\complexK_\octa$ which is the same simplicial complex
with $\complexK_4$ but has the same convention of notations $v^i,
e^i, f^{-1}, f^i$ with $\complexK_\tetra,$ $\complexK_\icosa.$ Also,
put $|\complexK_\octa|=|\complexK_4|.$ These notations are
illustrated in Figure \ref{figure: introduction}.(b). With these
notations, we explain for $\hat{D}_{\hat{R}}$-entry of Table
\ref{table: introduction}. To each finite $\bar{R},$ we assign a
path $\hat{D}_{\hat{R}}$ (called the (closed)
\textit{one-dimensional fundamental domain}) in $|\hatK_{\hat{R}}|$
which is listed in the third column of Table \ref{table:
introduction} where $b(\sigma)$ is the barycenter of $\sigma$ for
any simplex $\sigma$ and $[x,y]$ is the shortest path in the
underlying space $|\complexK|$ for any simplicial complex
$\complexK$ and two points $x,y$ in $|\complexK|.$ And, let
$\hat{d}^0$ and $\hat{d}^1$ be boundary points of
$\hat{D}_{\hat{R}}$ such that $\hat{d}^0$ is nearer to $v^0$ than
$\hat{d}^1.$ For each finite $\bar{R},$ we define one more point
$\hat{d}^{-1} \in |\hatK_{\hat{R}}|$ which is listed in the fourth
column in Table \ref{table: introduction}. If $\bar{R}$ is
one-dimensional, then denote by $\hat{D}_{\hat{R}}$ the one point
set $\{ v^0 = (1,0,0) \},$ and let $\hat{d}^{-1},$ $\hat{d}^0,$
$\hat{d}^1$ be equal to $S,$ $v^0,$ $v^0,$ respectively. Similarly,
if $\bar{R}$ is three-dimensional, then denote by
$\hat{D}_{\hat{R}}$ the one point set $\{ S \},$ and let
$\hat{d}^{-1},$ $\hat{d}^0,$ $\hat{d}^1$ be all equal to $S.$ So
far, we have defined $\hat{d}^{-1},$ $\hat{d}^0,$ $\hat{d}^1$ for
each $\bar{R}.$ Put $d^i = o ( \hat{d}^i )$ for $i \in I^+$ where
$I= \{ 0, 1 \}$ and $I^+= \{ -1, 0, 1 \}.$ Then, $d^{-1},$ $d^0,$
$d^1$ are wanted points of $\RP^2$ according to $\bar{R},$ and we
will consider the restriction $E|_{\{ d^{-1}, ~ d^0, ~ d^1 \}}$ for
each $E$ in $\Vect_{G_\chi} (\RP^2, \chi).$ We define the semigroup
which will be shown to be equal to the set of all the restrictions.
If $\bar{R}$ is finite, let $C(\hat{d}^i)$ be the shortest path in
$|\hatK_{\hat{R}}|$ connecting $\hat{d}^{-1}$ with $\hat{d}^i$ for
$i \in I.$ Otherwise, let $C(\hat{d}^i)$ be the shortest path in
$S^2$ connecting $\hat{d}^{-1}$ with $\hat{d}^i$ for $i \in I.$ And,
denote images
\begin{equation*}
o (\hat{D}_{\hat{R}}), \quad o \Big( C(\hat{d}^0) \Big), \quad o
\Big( C(\hat{d}^1) \Big)
\end{equation*}
by
\begin{equation*}
D_R, \qquad C(d^0), \qquad C(d^1),
\end{equation*}
respectively.

\begin{definition} \label{definition: A_R}
For $\chi \in \Irr(H),$ assume that $\rho(G_\chi) = R$ for some
$\bar{R}$ of Table \ref{table: introduction}. Let $A_{G_\chi} (
\RP^2, \chi )$ be the semigroup of triples $( W_{d^{-1}}, W_{d^0},
W_{d^1} )$ in $\Rep \Big( (G_\chi)_{d^{-1}} \Big)$ $\times$ $\Rep
\Big( (G_\chi)_{d^0} \Big)$ $\times$ $\Rep \Big( (G_\chi)_{d^1}
\Big)$ satisfying
\begin{enumerate}
\item[i)] $W_{d^{-1}}$ is $\chi$-isotypical,
\item[ii)] $\res_{(G_\chi)_{C(d^i)}}^{(G_\chi)_{d^{-1}}}
      W_{d^{-1}} \cong
      \res_{(G_\chi)_{C(d^i)}}^{(G_\chi)_{d^i}}
      W_{d^i}$ for $i \in I,$
\item[iii)] $\res_{(G_\chi)_{D_R}}^{(G_\chi)_{d^0}}
      W_{d^0} \cong
      \res_{(G_\chi)_{D_R}}^{(G_\chi)_{d^1}}
      W_{d^1},$
\item[iv)] $W_{d^1} \cong ~ ^g W_{d^0}$ if there
      exists $g \in G_\chi$
      such that $g d^0 = d^1$
\end{enumerate}
where $(G_\chi)_T$ is the subgroup of $G_\chi$ fixing all points of
a subset $T \subset \RP^2.$ And, let $p_\vect^\prime :
\Vect_{G_\chi} (\RP^2, \chi) \rightarrow A_{G_\chi} ( \RP^2, \chi )$
be the map defined as $E \mapsto ( E_{d^{-1}}, E_{d^0}, E_{d^1} ).$
\end{definition}

\begin{remark}
It might seem that $A_{G_\chi} ( \RP^2, \chi )$ is defined in a
different way with $A_{G_\chi} ( S^2, \chi )$ of \cite{Ki}. But,
these two definitions are defined in the exactly same way by
\cite[Lemma 3.10.]{Ki}.
\end{remark}

Well-definedness of $p_{\vect}^\prime$ is proved in Proposition
\ref{proposition: well-definedness of A_R}. For notational
simplicity, denote a triple $( W_{d^{-1}}, W_{d^0}, W_{d^1} )$ by
$(W_{d^i})_{i \in I^+}.$ Now, we can state main results. Let $c_1 :
\Vect_{G_\chi} (\RP^2, \chi) \rightarrow H^2 (\RP^2)$ be the map
defined as $[E] \mapsto c_1 (E).$

\begin{main} \label{main: only by isotropy}
Assume that $\rho(G_\chi)$ is equal to $R$ for some $\bar{R}$ of
Table \ref{table: introduction} except $\Z_n$ with odd $n.$ Then,
$p_\vect^\prime$ is an isomorphism.
\end{main}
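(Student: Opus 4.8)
The plan is to transport the problem across the double cover $o : S^2 \to \RP^2$ and invoke the classification of equivariant bundles over $S^2$ from \cite{Ki}. Write $Z = \{\id, g_0\}$ with $g_0 = -\id$ acting on $S^2 = |\hatK_{\hat{R}}|$ as the (free) antipodal involution, so that $\RP^2 = S^2/Z$ and $o$ is the orbit map. Since $g_0$ acts freely on $S^2$, pullback and quotient are mutually inverse and set up a bijection
\[
\Vect_{G_\chi}(\RP^2, \chi) \;\cong\; \Vect_{G_\chi \times Z}(S^2, \chi), \qquad E \longmapsto o^* E,
\]
which is an isomorphism of semigroups (direct sums correspond). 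Here $o^*E$ carries its pulled-back $G_\chi$-action through $\hat{\rho}$ together with the canonical $Z$-action coming from $o \circ g_0 = o$; the $\chi$-isotypical condition is preserved because $H = \ker\rho$ acts trivially on both $S^2$ and $\RP^2$ and $o$ is ($H$-trivially) equivariant.

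First I would match the invariants across this bijection. The points $\hat{d}^{-1}, \hat{d}^0, \hat{d}^1 \in S^2$ of Table \ref{table: introduction} are chosen precisely so that $o(\hat{d}^i) = d^i$, and the surjection $(G_\chi \times Z)_{\hat{d}^i} \twoheadrightarrow (G_\chi)_{d^i}$ identifies $(o^*E)_{\hat{d}^i}$ with $E_{d^i}$ as isotropy representations. Consequently $p_\vect^\prime$ becomes, under the bijection, the isotropy-representation map $p_\vect$ for $S^2$ studied in \cite{Ki}, and (by the Remark, via \cite[Lemma 3.10.]{Ki}) the target $A_{G_\chi}(\RP^2,\chi)$ is literally the corresponding $S^2$-semigroup $A_{G_\chi \times Z}(S^2,\chi)$: conditions i)--iv) are exactly the compatibility relations along $D_R$, $C(d^0)$, $C(d^1)$, lifted to $\hat{D}_{\hat{R}}$, $C(\hat{d}^0)$, $C(\hat{d}^1)$, that cut out that semigroup. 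Thus it suffices to show that $p_\vect$ restricts to an isomorphism onto $A_{G_\chi \times Z}(S^2,\chi)$ for the bundles arising as $o^*E$.

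The decisive extra constraint is on the nonequivariant first Chern class. For any $(G_\chi\times Z)$-bundle $F$ over $S^2$ the $Z$-structure gives $F \cong g_0^* F$, and since $g_0$ is orientation-reversing it acts as $-1$ on $H^2(S^2) \cong \Z$; hence $c_1(F) = g_0^* c_1(F) = -c_1(F)$, so $c_1(F) = 0$. Equivalently $o^* c_1(E) = 0$, consistent with $c_1(E) \in H^2(\RP^2)$ being $2$-torsion. The classification of \cite{Ki} expresses $\Vect_{G_\chi\times Z}(S^2,\chi)$ by the isotropy triple together with a clutching/degree invariant, and the content of the theorem is that once this degree is pinned to $0$ by the $Z$-equivariance, the isotropy triple determines the bundle and every admissible triple is realized. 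I would verify this case by case over the finite groups $\octa, \tetra, \icosa, \D_n$ and $\Z_n$ with even $n$, and over the one- and three-dimensional groups of Table \ref{table: introduction}, reading off from \cite{Ki} that the clutching constant is recovered from the representations of conditions ii)--iii) together with the vanishing degree; surjectivity then follows by building the corresponding $S^2$-bundle from an admissible triple and descending it, and injectivity by descending a $(G_\chi\times Z)$-equivariant isomorphism over $S^2$.

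The main obstacle is exactly this last matching step: showing that fixing the nonequivariant degree to $0$ removes all freedom beyond the isotropy representations. Over $S^2$ the equivariant clutching invariant of \cite{Ki} is generally finer than the nonequivariant degree, and the point where they diverge is $\bar{R} = \Z_n$ with odd $n$ --- there $\hat{R} = \Z_n \times Z$ is the cyclic rotoreflection group of order $2n$ acting on $\complexK_{2n}$, the fundamental domain carries no interior reflection constraint, and a residual $\Z_2$ of clutching data survives that is invisible to the isotropy triple (this is where $c_1$ becomes a genuinely independent invariant, handled separately). Confirming that no such residue survives for the remaining entries of Table \ref{table: introduction}, i.e. that the congruence defining $A_{G_\chi\times Z}(S^2,\chi)$ together with degree $0$ forces a unique clutching, is the crux of the argument and the reason $\Z_n$ with odd $n$ must be excluded here.
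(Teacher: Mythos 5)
Your reduction to the two-sphere is exactly the paper's: the pullback $O : \Vect_{G_\chi}(\RP^2,\chi) \to \Vect_{G_\chi\times Z}(S^2,\chi)$ is a semigroup isomorphism because $Z$ acts freely, and the identification $P_1$ of $A_{G_\chi}(\RP^2,\chi)$ with $A_{G_\chi\times Z}(S^2,\chi)$ (Proposition \ref{proposition: pullback isomorphism for A_R}, resting on Lemma \ref{lemma: stabilizer correspondence}) makes the square (\ref{equation: p_vect commute}) commute, so that $p_\vect^\prime = P_1^{-1}\circ p_\vect\circ O.$ Up to this point you agree with the paper.

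The gap is in the last step. The paper finishes by quoting \cite[Theorem C]{Ki}: for the covering groups $\hat{R} = \bar{R}\times Z$ with $\bar{R}$ as in the hypothesis, the isotropy map $p_\vect$ over $S^2$ is \emph{already} an isomorphism (the excluded $\Z_n$ with odd $n$ being precisely one of the two exceptional cases of \cite{Ki}). You instead try to reconstitute this from a coarser version of the $S^2$ classification, namely ``isotropy triple plus nonequivariant degree,'' using the (correct) observation that $Z$-equivariance forces $c_1 = 0$ over $S^2,$ and you explicitly defer the decisive case-by-case check that degree $0$ kills all residual clutching freedom. That check is not a formality, and the degree-$0$ observation cannot supply it: the $\Z_n$-odd case itself shows that two non-isomorphic $(G_\chi\times Z)$-bundles over $S^2$ can share both the isotropy triple and the (necessarily zero) first Chern class, so the nonequivariant degree is too coarse to detect the equivariant clutching invariant. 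What is actually needed for the remaining entries of Table \ref{table: introduction} is the full strength of \cite[Theorem C]{Ki} --- that the isotropy data alone determines the equivariant clutching class --- and as written your argument neither proves this nor cites it in a usable form. Replacing your third step by that citation closes the gap and recovers the paper's proof.
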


\begin{main} \label{main: by isotropy and chern}
Assume that $\rho (G_\chi) = R$ for $\bar{R} = \Z_n$ with odd $n.$
Then, the preimage $p_{\vect}^{\prime -1} ( \mathbf{W} )$ has
exactly two elements for each $\mathbf{W}$ in $A_{G_\chi} ( \RP^2,
\chi ).$ And, $[E \oplus E_1] \ne [E \oplus E_2]$ for any bundles
$E,$ $E_1,$ $E_2$ in $\Vect_{G_\chi} (\RP^2, \chi)$ such that
$p_\vect^\prime([E_1]) = p_\vect^\prime([E_2])$ and $[E_1] \ne
[E_2].$ Also, the following hold:
\begin{enumerate}
  \item if $\chi(\id)$ is even, then Chern classes of two elements of
  $p_{\vect}^{\prime -1} ( \mathbf{W} )$ are the same,
  \item if $\chi(\id)$ is odd, then Chern classes of two elements of
  $p_{\vect}^{\prime -1} ( \mathbf{W} )$ are different.
\end{enumerate}
\end{main}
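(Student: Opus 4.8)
The plan is to reduce the whole statement to a question about ``multiplicity bundles'' and then to transport the counting to the two-sphere through $o$. By the $\chi$-isotypical reduction of \cite[Section 2]{CKMS}, every $E\in\Vect_{G_\chi}(\RP^2,\chi)$ is isomorphic to $F\otimes\underline{V_\chi}$, where $V_\chi$ is a fixed $G_\chi$-representation restricting to a multiple of $\chi$ on $H$ and $F$ is a rank-$m$ complex bundle on which $H$ acts trivially, i.e.\ an $R$-equivariant bundle for $R=\imath(\bar R)=\Z_n$. Since $\underline{V_\chi}$ is non-equivariantly trivial, forgetting the action gives $E\cong F^{\oplus\chi(\id)}$, so that
\[
c_1(E)=\chi(\id)\,c_1(F)\in H^2(\RP^2)=\Z_2 .
\]
This identification is compatible with $p_\vect^\prime$, since the triple $(E_{d^{-1}},E_{d^0},E_{d^1})$ is determined by, and determines, the corresponding isotropy triple of $F$ together with $V_\chi$. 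Thus it suffices to control the fibers of the analogous restriction map on rank-$m$ $R$-bundles over $\RP^2$ and to track $c_1$ through the factor $\chi(\id)$; the existence of $V_\chi$ and the well-definedness of $F=\Hom_H(\underline{V_\chi},E)$ are exactly the input provided by \cite{CKMS}.

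First I would settle the counting, i.e.\ that $p_\vect^{\prime-1}(\mathbf W)$ has exactly two elements. Over the $2$-complex $\RP^2$ a rank-$m$ complex bundle ($m\ge1$) is classified non-equivariantly by $c_1\in H^2(\RP^2)=\Z_2$, so for a fixed isotropy triple there are \emph{at most} two multiplicity bundles $F$, distinguished by $c_1(F)$. That both values occur is where $\bar R=\Z_n$ with $n$ odd is special: pulling back along $o$, the group $\hat R=\bar R\times Z$ is the cyclic group generated by the order-$2n$ rotary reflection $a_n g_0\cong\Z_{2n}$, acting on $\hatK_{\hat R}=\complexK_{2n}$. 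This is precisely the exceptional rotary-reflection case of \cite{Ki}, in which the restriction-to-isotropy map over $S^2$ is two-to-one. Since $Z$ acts freely on $S^2$ with quotient $\RP^2$, pulling back and descending give a bijection $\Vect_{G_\chi}(\RP^2,\chi)\cong\Vect_{G_\chi\times Z}(S^2,\chi)$ carrying $p_\vect^\prime$ to the $S^2$-restriction map, so transport yields that each fiber has exactly two elements. For even $n$ (and the remaining $\bar R$ of Table~\ref{table: introduction}) the corresponding $S^2$-case of \cite{Ki} is \emph{not} exceptional, the isotropy pins down $c_1(F)$, and one recovers the bijection of Theorem~\ref{main: only by isotropy}. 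Isolating exactly why the oddness of $n$ frees the class $c_1(F)\in\Z_2$ — equivalently, identifying the exotic \cite{Ki}-invariant of the rotary-reflection case with $c_1$ of the multiplicity bundle reduced modulo $2$ — is the main obstacle, and rests on the equivariant clutching computation over $\complexK_{2n}$.

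Granting the count, assertions (1)--(2) are immediate from the displayed formula, and robust to any twist in the reduction because only a difference is computed. The two elements of $p_\vect^{\prime-1}(\mathbf W)$ are $F\otimes\underline{V_\chi}$ and $F'\otimes\underline{V_\chi}$ with $c_1(F')=c_1(F)+1$ in $\Z_2$; hence their Chern classes differ by $\chi(\id)\bigl(c_1(F')-c_1(F)\bigr)=\chi(\id)\in\Z_2$. This vanishes exactly when $\chi(\id)$ is even, giving (1), and is nonzero exactly when $\chi(\id)$ is odd, giving (2).

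Finally, the cancellation assertion follows from the same reduction together with additivity of $c_1$ of the multiplicity bundle, uniformly in the parity of $\chi(\id)$. If $p_\vect^\prime([E_1])=p_\vect^\prime([E_2])$ with $[E_1]\ne[E_2]$, then by the count $E_1\cong F_1\otimes\underline{V_\chi}$ and $E_2\cong F_2\otimes\underline{V_\chi}$ with $c_1(F_1)\ne c_1(F_2)$ in $\Z_2$. Writing $E\cong F_0\otimes\underline{V_\chi}$, the multiplicity bundle of $E\oplus E_i$ is $F_0\oplus F_i$, with $c_1(F_0\oplus F_i)=c_1(F_0)+c_1(F_i)$; since $c_1(F_1)\ne c_1(F_2)$ these two classes differ, so $E\oplus E_1$ and $E\oplus E_2$ have non-isomorphic multiplicity bundles and therefore $[E\oplus E_1]\ne[E\oplus E_2]$, as $E\mapsto\Hom_H(\underline{V_\chi},E)$ is an invariant. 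I expect the genuinely hard part to be the equivariant clutching computation underlying the count in the second paragraph, after which the Chern-class and cancellation statements are formal consequences of the linearity $c_1(E)=\chi(\id)\,c_1(F)$ and the additivity of $c_1$.
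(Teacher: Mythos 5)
Your count of the fibers (transport of the two-element fibers from $S^2$ through the pullback isomorphism $O$ and \cite[Theorem B]{Ki}) is exactly the paper's argument and is fine. The genuine gap is the step you yourself flag as ``the main obstacle'': the claim that the two elements of $p_\vect^{\prime-1}(\mathbf{W})$ have multiplicity bundles $F,F'$ with $c_1(F')=c_1(F)+1$ in $\Z_2$. Your justification --- rank-$m$ bundles over the $2$-complex $\RP^2$ are classified non-equivariantly by $c_1$, hence ``at most two multiplicity bundles, distinguished by $c_1(F)$'' --- is a non sequitur: non-equivariant classification bounds the number of \emph{non-equivariant} isomorphism classes, not the number of equivariant classes with prescribed isotropy data, and equivariantly distinct bundles can be non-equivariantly isomorphic. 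Indeed part (1) of the very theorem you are proving says that for $\chi(\id)$ even the two elements of a fiber have the \emph{same} $c_1$; and ``the two multiplicity bundles are distinguished by $c_1(F)$'' is itself the special case $\chi=\id$ of the theorem, so it cannot be assumed. Everything in your parts (1), (2) and in your cancellation argument sits downstream of this unproven claim, and you defer precisely the computation that would establish it. The paper closes this gap explicitly: it realizes the two bundles as equivariant clutching constructions of the trivial bundle $F=|\complexK_R|\times W_{d^{-1}}$ via clutching maps $\Phi=q_\Omega(\bar{\Phi})$ and $\Phi'=q_\Omega(\bar{\Phi}')$ obtained from the $S^2$ picture through the correspondence of Lemma \ref{lemma: Omega correspondence}, observes that the first bundle is trivial, and shows that after equivariant cancellations (using oddness of $n$) the loop $\Phi'$ reduces to a single generator of $\pi_1\big(\Iso_H(W_{d^{-1}}),\id\big)$, whence $c_1(E')\equiv\chi(\id)\bmod 2$ by \cite[Lemma 7.1]{Ki}. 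That computation is the actual content of parts (1)--(2); the cancellation statement is obtained in the paper simply by transporting the corresponding assertion of \cite[Theorem B]{Ki} through $O$.

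A secondary issue: writing $E\cong F\otimes\underline{V_\chi}$ with a \emph{product} bundle $\underline{V_\chi}$ presupposes that $\chi$ extends to a genuine $G_\chi$-representation, which need not hold. What is actually available (Corollary \ref{corollary: reduction to line bundle}, itself derived from \cite[Theorem D]{Ki} through $O$, not merely from \cite[Section 2]{CKMS}) is a minimal-rank \emph{bundle} $L$ with $E\cong F\otimes L$, and then $c_1(E)=\chi(\id)\,c_1(F)+\rank(F)\,c_1(L)$. Your difference computations would survive this correction, since $\rank(F)$ is the same for both elements of a fiber, but the displayed identity $c_1(E)=\chi(\id)\,c_1(F)$ is unjustified as stated. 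Were the $c_1(F_1)\ne c_1(F_2)$ claim actually established, your multiplicity-bundle derivation of the cancellation property would be a pleasant alternative to the paper's citation; as written, however, the central assertion of the theorem remains unproved.
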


\begin{corollary} \label{corollary: reduction to line bundle}
Assume that $\rho (G_\chi) = R$ for $\bar{R} = \Z_n$ with odd $n.$
Then, we have
\begin{equation*}
\Vect_{G_\chi} (\RP^2, \chi) \cong \Vect_R (\RP^2)
\end{equation*}
as semigroups, and $\Vect_R (\RP^2)$ is generated by line bundles.
Also, $A_R (\RP^2, \id)$ is generated by all the elements with
one-dimensional entries where we simply denote by $\id$ the trivial
character of the trivial group. The number of such elements in $A_R
(\RP^2, \id)$ is equal to $n.$
\end{corollary}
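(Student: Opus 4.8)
The plan is to prove Corollary \ref{corollary: reduction to line bundle} by specializing Theorem \ref{main: by isotropy and chern} to the case where the kernel $H = \ker\rho$ is trivial and $\chi = \id$. First I would observe that when $\bar{R} = \Z_n$ with odd $n$, it suffices to treat the universal case $G_\chi = R$ acting effectively, since for general $G_\chi$ the bundle classification factors through the action of the quotient $R = \rho(G_\chi)$; the character $\chi$-isotypical condition with trivial $H$ reduces $\Vect_{G_\chi}(\RP^2,\chi)$ to $\Vect_R(\RP^2)$ as semigroups. I would make this reduction explicit by comparing the defining data of $A_{G_\chi}(\RP^2,\chi)$ with $A_R(\RP^2,\id)$: the isotropy groups $(G_\chi)_{d^i}$ map onto the corresponding isotropy groups $R_{d^i}$, and conditions i)--iv) of Definition \ref{definition: A_R} correspond under this map.

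\smallskip

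Next I would establish that $\Vect_R(\RP^2)$ is generated by line bundles. The key input is the structure of the isotropy groups for $\bar{R} = \Z_n$ with odd $n$: from Table \ref{table: introduction}, $\hat{D}_{\hat R} = |e^0|$ and $\hat{d}^{-1} = S$, so the relevant isotropy groups $R_{d^{-1}}, R_{d^0}, R_{d^1}$ at the three points are \emph{cyclic} (indeed subgroups of $\Z_n$, which is abelian). Since all irreducible complex representations of a cyclic — more generally abelian — group are one-dimensional, every element $(W_{d^i})_{i \in I^+}$ of $A_R(\RP^2,\id)$ decomposes as a sum of triples each of whose entries is one-dimensional. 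By Theorem \ref{main: by isotropy and chern}, $p_\vect^\prime$ is two-to-one and additive, and the cancellation property $[E \oplus E_1] \ne [E \oplus E_2]$ whenever $[E_1]\ne[E_2]$ with equal $p_\vect^\prime$-image shows the semigroup $\Vect_R(\RP^2)$ embeds additively over $A_R(\RP^2,\id)$; combining additivity with the decomposition of each triple into one-dimensional pieces shows every bundle is a sum of line bundles.

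\smallskip

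Then I would identify the generators of $A_R(\RP^2,\id)$ and count them. The claim is that $A_R(\RP^2,\id)$ is generated by the elements all of whose entries are one-dimensional. Given the decomposition just described, it remains to check that each triple of compatible one-dimensional representations is genuinely indecomposable (cannot be split further as an element of the semigroup) and to realize each as the restriction of an actual line bundle so that it lies in the image. For the count, I would enumerate the compatible triples $(W_{d^{-1}}, W_{d^0}, W_{d^1})$ of one-dimensional representations satisfying i)--iv). Since $\bar{R} = \Z_n$ acts on $\RP^2$ with the three marked points having isotropy inside the cyclic group $\Z_n$, the compatibility conditions (restriction isomorphisms along the paths $C(d^i)$ and along $D_R$) pin down $W_{d^0}$ and $W_{d^1}$ once $W_{d^{-1}}$ is chosen, and condition iv) is vacuous here because no element of $R$ interchanges $d^0$ and $d^1$ for $\bar R = \Z_n$. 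The one-dimensional $\chi$-isotypical representations $W_{d^{-1}}$ of $R_{d^{-1}} = \Z_n$ (with $\id$-isotypical meaning no constraint, since $H$ is trivial) are the $n$ distinct characters of $\Z_n$, yielding exactly $n$ generators.

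\smallskip

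The main obstacle I expect is the precise determination of the isotropy groups $R_{d^{-1}}, R_{d^0}, R_{d^1}$ and the compatibility subgroups $R_{C(d^i)}, R_{D_R}$ for $\bar R = \Z_n$ with odd $n$, and verifying that conditions i)--iv) indeed reduce the data of a compatible triple to a single free choice of a character of $\Z_n$. Because $n$ is odd, the covering map $o$ and the factor $Z = \{\pm\id\}$ interact with the cyclic action in a way that (by the construction in Section \ref{section: relation RP^2 and S^2}) produces the $\complexK_{2n}$ structure rather than $\complexK_n$; tracking how the isotropy along the fundamental domain $D_R$ constrains $W_{d^0}$ versus $W_{d^1}$, and confirming the count is exactly $n$ rather than some multiple or divisor, is the delicate bookkeeping step. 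Everything else follows formally from Theorem \ref{main: by isotropy and chern} and the abelian-representation decomposition.
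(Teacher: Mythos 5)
There is a genuine gap in the first and most substantive claim, the isomorphism $\Vect_{G_\chi} (\RP^2, \chi) \cong \Vect_R (\RP^2).$ You treat this as a formal consequence of ``factoring through the quotient'' after setting $H$ trivial and $\chi = \id,$ but the corollary asserts it for an arbitrary $\chi \in \Irr(H)$ with $H = \ker \rho$ possibly nontrivial, and in that generality the reduction is \emph{not} automatic: $\Vect_{G_\chi} (\RP^2, \chi)$ is identified with $\Vect_{G_\chi / H} (\RP^2) = \Vect_R (\RP^2)$ only after one exhibits a $\chi$-isotypical $G_\chi$-bundle over $\RP^2$ of the minimal rank $\chi(\id)$ (one then tensors with it, via \cite[Lemma 2.2]{CKMS}). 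The existence of such a bundle is exactly the nontrivial input; the paper gets it from \cite[Theorem D]{Ki} applied to $\Vect_{G_\chi \times Z} (S^2, \chi)$ and then transported to $\RP^2$ through the covering isomorphism $O$ of (\ref{equation: isomorphism RP^2 and S^2}). Your proposal never produces this bundle, so the first claimed isomorphism is asserted rather than proved.

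Two further points. First, your argument that $\Vect_R (\RP^2)$ is generated by line bundles does not close: since $p_\vect^\prime$ is two-to-one here, additivity of $p_\vect^\prime$ plus a decomposition of $p_\vect^\prime ([E])$ into one-dimensional triples only shows that $E$ agrees with a sum of line bundles \emph{up to the two-element fiber}; to conclude that both elements of $p_\vect^{\prime -1} (\mathbf{W})$ are sums of line bundles you must invoke Theorem \ref{main: by isotropy and chern}(2) with $\chi(\id) = 1$ odd, so that swapping one line-bundle summand for its partner changes $c_1$ and realizes both preimages. (The paper sidesteps this by transferring the generation statement itself from $S^2$ via \cite[Theorem D]{Ki}.) Second, your claim that condition iv) of Definition \ref{definition: A_R} is vacuous because no element of $R$ carries $d^0$ to $d^1$ is false for odd $n$: in $\RP^2$ the antipodal identification makes $d^0 = o(v^0)$ and $d^1 = o(v^1)$ lie in one $R$-orbit (the class of $a_n^{(n+1)/2}$ sends one to the other). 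This happens to be harmless for the count, since $R_{d^0} = R_{d^1} = \langle \id \rangle$ forces the last two entries to be determined by dimension alone, so the count $n$ is just the number of characters of $R_{d^{-1}} \cong \Z_n$ --- which is the computation the paper actually performs.
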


This paper is organized as follows. In Section \ref{section: closed
subgroups}, we list all closed subgroups of $\PGL(3, \R)$ up to
conjugacy, and show that a topological action on $\RP^2$ by a
compact Lie group is conjugate to a projective linear action. Also,
we show that $p_\vect^\prime$ is well-defined. In Section
\ref{section: relation RP^2 and S^2}, we prove Theorem \ref{main:
only by isotropy} through covering action. In Section \ref{section:
proof of Theorem B}, we prove Theorem \ref{main: by isotropy and
chern} and Corollary \ref{corollary: reduction to line bundle}
through equivariant clutching construction.

\section{Compact subgroups of $\PGL (3, \R)$} \label{section: closed subgroups}

In this section, we list all compact subgroups of $\PGL(3, \R)$ up
to conjugacy, and show that a topological action on $\RP^2$ by a
compact Lie group is conjugate to a projective linear action. Also,
we show that $p_\vect^\prime$ is well-defined.

First, we define covering action of the $G_\chi$-action on $\RP^2.$
We call $G_\chi \times Z$ and its action on $S^2$ through
$\hat{\rho}$ the covering group of $G_\chi$ and the covering action
of the $G_\chi$-action on $\RP^2,$ respectively. Here, we note that
two kernels of the $(G_\chi \times Z)$-action on $S^2$ and the
$G_\chi$-action on $\RP^2$ are equal because $\imath$ is injective.
By using covering action, we list all closed subgroups of $\PGL(3,
\R)$ up to conjugacy.

\begin{proposition} \label{proposition: closed subgroups}
Each compact subgroup of $\PGL(3, \R)$ is conjugate to one of $R$
for some $\bar{R}$-entry in Table \ref{table: introduction}.
\end{proposition}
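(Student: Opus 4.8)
The plan is to classify compact subgroups of $\PGL(3,\R)$ by reducing to the already-known classification of closed subgroups of $\SO(3)$ via the covering group construction. The key structural fact is that $\PO(3,\R)$ is a maximal compact subgroup of $\PGL(3,\R)$, so every compact subgroup is conjugate into $\PO(3,\R)$. Now I need to understand $\PO(3,\R)$ itself: since $\orthogonal(3) = \SO(3) \times Z$ where $Z = \{\id, -\id\}$ (because $-\id$ has determinant $-1$ in odd dimension... wait, in dimension 3, $\det(-\id) = -1$, so $-\id \notin \SO(3)$), we have $\orthogonal(3) = \SO(3) \times \{\pm\id\}$ as an internal direct product. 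Under the quotient map $o_{\GL(3,\R)}$, the center $Z(\GL(3,\R))$ consists of scalar matrices, and $-\id$ becomes trivial in $\PGL(3,\R)$. So $\PO(3,\R) = \orthogonal(3)/Z \cong \SO(3)$ via the isomorphism $\imath$. This is the crux: compact subgroups of $\PGL(3,\R)$ correspond, up to conjugacy, to closed (hence compact) subgroups of $\SO(3)$.

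The key steps, in order, are as follows. First I would invoke maximality of $\PO(3,\R)$ to conjugate an arbitrary compact subgroup $P < \PGL(3,\R)$ into $\PO(3,\R)$. Second, I would establish the isomorphism $\imath : \SO(3) \to \PO(3,\R)$ is indeed an isomorphism of topological groups, using the direct product decomposition $\orthogonal(3) = \SO(3) \times Z$ and the fact that $Z$ is exactly the intersection of $\orthogonal(3)$ with the scalars $Z(\GL(3,\R))$ that get killed in the quotient. Third, I would pull back $P$ under $\imath$ to obtain a closed subgroup $\bar{P} = \imath^{-1}(P) < \SO(3)$, and then apply the cited classification \cite[Theorem 11]{R} to conjugate $\bar{P}$ within $\SO(3)$ to one of the eight standard subgroups $\Z_n, \D_n, \tetra, \octa, \icosa, \SO(2), \orthogonal(2), \SO(3)$. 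Finally I would push this conjugacy back through $\imath$: a conjugation by $g \in \SO(3)$ carrying $\bar{P}$ to the standard $\bar{R}$ gives a conjugation by $\imath(g)$ carrying $P$ to $R = \imath(\bar{R})$, since $\imath$ is a group isomorphism and hence intertwines conjugation. The only bookkeeping point is to confirm $\Z_2$ is conjugate to $\D_1$ so that excluding $\D_1$ from Table \ref{table: introduction} loses nothing, which is immediate since both are generated by a rotation through angle $\pi$ about an axis and any two such are conjugate in $\SO(3)$.

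The main obstacle I anticipate is not the group theory per se but correctly handling the maximal-compact-subgroup reduction and verifying that conjugacy in $\PGL(3,\R)$ can be realized by an element of $\PO(3,\R)$ rather than a general element of $\PGL(3,\R)$. The cleanest way around this is the standard fact that all maximal compact subgroups of a Lie group are conjugate, together with the observation that any compact $P$ sits inside \emph{some} maximal compact, which is conjugate to $\PO(3,\R)$; hence $P$ is conjugate into $\PO(3,\R)$ and I may assume from the outset that $P < \PO(3,\R) = \imath(\SO(3))$. Once inside $\PO(3,\R)$, everything reduces transparently to $\SO(3)$ via the isomorphism $\imath$, and the remaining classification is purely a citation of \cite[Theorem 11]{R}. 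I would close by remarking that this is precisely the reduction underlying the definition of $\bar{\rho}$ and the covering homomorphism $\hat{\rho}$ in the introduction, so the eight cases of the Rubinsztein classification are exactly the rows of Table \ref{table: introduction} (after discarding $\D_1$).
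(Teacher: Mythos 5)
Your proposal is correct and follows essentially the same route as the paper: conjugate into the maximal compact subgroup $\PO(3,\R)$, identify $\PO(3,\R)$ with $\SO(3)$ via $\imath$ (equivalently, via the map $\bar{\rho}$), and invoke the classification of closed subgroups of $\SO(3)$ from \cite[Theorem 11]{R}, transporting the conjugacy back through the injective homomorphism $\imath$. You spell out the decomposition $\orthogonal(3)=\SO(3)\times Z$ and the conjugation bookkeeping more explicitly than the paper does, but the underlying argument is identical.
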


\begin{proof}
Let $K$ be an arbitrary compact subgroup of $\PGL(3, \R),$ and let
$\rho : K \rightarrow \PGL(3, \R)$ be the inclusion. We may assume
that $K \subset \PO (3, \R)$ by (\ref{equation: assumption on rho}).
Since the codomain of $\bar{\rho}$ is equal to $\SO(3),$ the image
of $\bar{\rho}$ is conjugate to one of
\begin{equation*}
\Z_n, \quad \D_n, \quad \tetra, \quad \octa, \quad \icosa, \quad
\SO(2), \quad \orthogonal(2), \quad \SO(3).
\end{equation*}
This shows that the image of $\rho$ is conjugate to one of $R$ for
some $\bar{R}$-entry in Table \ref{table: introduction} because
$\imath$ is injective. So, we obtain a proof.
\end{proof}

Now, we prove that a compact Lie group action on $\RP^2$ is
conjugate to a projective linear action.

\begin{proposition} \label{proposition: conjugate to projective
linear} If a compact Lie group $G$ act topologically on $\RP^2,$
then it is conjugate to a projective linear action.
\end{proposition}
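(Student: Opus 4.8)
The plan is to prove that any topological action of a compact Lie group $G$ on $\RP^2$ is conjugate to a projective linear one. The standard strategy for such rigidity statements is to put an invariant Riemannian metric on the manifold and then identify the resulting isometry group. First I would average an arbitrary Riemannian metric on $\RP^2$ over $G$ using the Haar measure to obtain a $G$-invariant metric, so that $G$ acts by isometries; this gives a continuous homomorphism $G \rightarrow \Isom(\RP^2)$. The key point is then to determine $\Isom(\RP^2)$ for the relevant metric and relate it to $\PGL(3,\R)$.

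The cleanest route uses the covering map $o : S^2 \rightarrow \RP^2$. I would lift the invariant metric on $\RP^2$ to a metric on $S^2$; since $o$ is a local isometry and the deck transformation (the antipodal map $g_0 = -\id$) is an isometry, the lifted metric is invariant under $g_0$. The isometry group of $(S^2, \text{lifted metric})$ is a compact Lie group, and because $\RP^2 = S^2/Z$ with $Z = \{\id, g_0\}$, isometries of $\RP^2$ correspond to isometries of $S^2$ commuting with $g_0$, i.e. descending through the quotient. Here I would invoke the classical fact that a Riemannian metric on $S^2$ whose isometry group is as large as possible (or after a further averaging/uniformization argument) can be taken to be the round metric, whose isometry group is exactly $\orthogonal(3)$; every such isometry commutes with $g_0 = -\id$, so $\Isom(\RP^2)$ is $\orthogonal(3)/Z = \PO(3,\R)$, which sits inside $\PGL(3,\R)$. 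Thus the averaged action already realizes $G$ inside $\PO(3,\R) \subset \PGL(3,\R)$ and is projective linear.

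With this in hand, the remaining task is to show that the \emph{conjugacy} between the original topological action and the projective linear one can be arranged. The homeomorphism carrying the original $G$-action to the isometric $G$-action is precisely the identification of $\RP^2$ with itself under which the averaged metric becomes round; the fact that any two smooth metrics on $S^2$ (hence on $\RP^2$) are related by a diffeomorphism after uniformization, compatibly with the finite symmetry, supplies the conjugating homeomorphism. One subtle point is that the original action is only assumed \emph{topological}, so one must first smooth it: I would appeal to the fact that a topological action of a compact Lie group on a low-dimensional manifold is conjugate to a smooth action (in dimension two this is classical, via the existence and uniqueness of smooth structures and equivariant smoothing), allowing the metric-averaging argument to proceed.

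The main obstacle I expect is the smoothing/uniformization step rather than the group theory: reducing a merely topological action to a smooth (or even isometric for the round metric) one, and verifying that the uniformizing diffeomorphism can be chosen $G$-equivariantly so that it genuinely conjugates the actions. In dimension two this is tractable because $\Isom^+(S^2)$ of any metric is a compact Lie group acting on a surface, and the classification of compact surface transformation groups is well understood; once the action is smooth and isometric for \emph{some} metric, averaging and uniformizing to the round metric—while tracking equivariance—completes the identification with a subgroup of $\PO(3,\R)$, and Proposition \ref{proposition: closed subgroups} then places the image among the $R$ of Table \ref{table: introduction}.
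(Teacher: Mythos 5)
Your overall strategy (average a metric, uniformize, land in $\PO(3,\R)$) founders on its very first step, and you have in fact located the obstacle yourself: the action is only assumed topological, so you cannot average a Riemannian metric over $G$ --- the pullback of a metric under a mere homeomorphism is undefined. The difficulty is that the ``classical'' smoothing fact you propose to invoke is essentially the theorem being proved. There is no off-the-shelf equivariant smoothing machine for topological actions of compact Lie groups, even on surfaces; a priori the individual homeomorphisms in the action could be wild, and taming them is precisely the content of the generalized Ker\'ekj\'art\'o theorem. The paper avoids this entirely: it lifts the $G$-action on $\RP^2$ to a covering $G^\prime$-action on $S^2$ via Bredon's Theorem I.9.3 (with $1 \to \Z_2 \to G^\prime \to G \to 1$), then cites the theorem of Kolev and Constantin--Kolev that a \emph{topological} action of a compact Lie group on $S^2$ is conjugate to a linear action, and finally lets the conjugated linear $G^\prime$-action descend to a projective linear $G$-action on $\RP^2$. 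Your averaging-plus-uniformization argument is, in outline, how one proves that cited theorem \emph{after} the topological-to-smooth reduction has been made, so as written it assumes the hardest part.

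Two secondary imprecisions, repairable but worth noting. First, uniformization gives only a conformal, not isometric, identification with the round sphere, so ``the metric can be taken to be round'' is not right; the correct statement is that $\Isom(S^2,\tilde g)$ embeds in the conformal group of the round sphere, and a compact subgroup of that group is conjugate into $\orthogonal(3)$ (for instance because it fixes a point of hyperbolic $3$-space). Second, to descend to $\RP^2$ you must check that the conjugating map can be chosen to commute with the antipodal map; this is where the covering-action formalism of Bredon, which your sketch never actually invokes for the \emph{action} (only for the metric), does real work. The smoothing gap, however, cannot be closed by a citation to ``equivariant smoothing'': you need either the Kolev/Constantin--Kolev result for topological actions on $S^2$ together with the covering-action lift, or an equivalent statement for $\RP^2$ directly.
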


\begin{proof}
By \cite[Theorem I.9.3]{B}, the $G$-action on $\RP^2$ has a covering
$G^\prime$-action on $S^2$ where $G^\prime$ satisfies a short exact
sequence
\begin{equation*}
\langle \id \rangle \rightarrow \Z_2 \longrightarrow G^\prime
\overset{\pr}{\longrightarrow} G \rightarrow \langle \id \rangle
\end{equation*}
for some homomorphism $\pr$ and the $G^\prime$-action on $S^2$
satisfies
\begin{equation*}
o( g^\prime \cdot \hat{x} ) = \pr( g^\prime ) \cdot o( \hat{x} )
\end{equation*}
for $g^\prime \in G^\prime$ and $\hat{x} \in S^2.$ It is well-known
that a topological action on $S^2$ by a compact Lie group is
conjugate to a linear action \cite[Theorem 1.2]{Ko}, \cite{CK}. By
using this, we may assume that the $G^\prime$-action is linear, and
we obtain a proof.
\end{proof}

To prove well-definedness of $p_\vect^\prime,$ we state a basic
lemma.

\begin{lemma}
\label{lemma: elementary lemma on isotropy representation} Let $G$
be a compact Lie group acting topologically on a topological space $X.$ And, let
$E$ be an equivariant vector bundle over $X.$ Then, $^g E_x \cong
E_{g x}$ for each $g \in G$ and $x \in X.$ Also,
\begin{equation*}
\res_{G_x \cap G_{x^\prime}}^{G_x} E_x \cong \res_{G_x \cap
G_{x^\prime}}^{G_{x^\prime}} E_{x^\prime}
\end{equation*}
for any two points $x, x^\prime$ in the same component of the fixed
set $X^{G_x \cap G_{x^\prime}}.$
\end{lemma}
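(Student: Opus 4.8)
The plan is to establish the two assertions separately, both by reducing to the definition of an equivariant vector bundle and exploiting the continuity of the bundle structure along orbits and connected fixed components. For the first claim, the key point is that each $g \in G$ acts as a bundle map covering the homeomorphism $x \mapsto gx$ of $X$, so its restriction to a single fiber furnishes a linear isomorphism $E_x \to E_{gx}$. I would check directly that this isomorphism is $G_x$-equivariant in the appropriate twisted sense: for $h \in G_x$ (equivalently $ghg^{-1} \in G_{gx}$), the identity $g \cdot (h \cdot v) = (ghg^{-1}) \cdot (g \cdot v)$ holds in $E$ for every $v \in E_x$, and this is exactly the statement that $g : E_x \to E_{gx}$ intertwines the $G_x$-representation $E_x$ with the conjugated $G_x$-representation $^gE_{gx}$. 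Hence $^gE_x \cong E_{gx}$ as $G_x$-representations (after matching up the conjugation conventions), which is the desired isomorphism.

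For the second claim, set $L = G_x \cap G_{x^\prime}$ and work over the fixed-point set $X^L$. The hypothesis is that $x$ and $x^\prime$ lie in the same connected component $C$ of $X^L$. The idea is that $L$ fixes every point of $C$, so over $C$ the group $L$ acts fiberwise and the restriction $E|_C$ carries the structure of an $L$-equivariant bundle whose base has trivial $L$-action. I would invoke the standard fact that over a connected base with trivial group action, the isotypic decomposition of the fibers of an equivariant bundle is locally constant: decomposing $E|_C = \bigoplus_{\psi \in \Irr(L)} E|_C^{(\psi)}$ into isotypical subbundles, each summand $E|_C^{(\psi)}$ is an ordinary (nonequivariant) vector bundle over the connected space $C$, so its fiber dimension at $x$ equals that at $x^\prime$. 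Consequently the characters of $\res^{G_x}_L E_x$ and $\res^{G_{x^\prime}}_L E_{x^\prime}$ agree, and since a representation is determined by its character this gives $\res^{G_x}_L E_x \cong \res^{G_{x^\prime}}_L E_{x^\prime}$.

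The main obstacle is making the isotypic-decomposition argument rigorous when $L$ is a general compact Lie group rather than a finite group. The cleanest route is to choose, locally over $C$, an $L$-invariant trivialization: by the equivariant local triviality built into the definition of an equivariant bundle (or by averaging an arbitrary trivialization over $L$ using the Haar measure, together with Schur's lemma to organize the result), one produces a neighborhood of each point of $C$ over which $E$ is $L$-isomorphic to a product bundle with a fixed fiber representation of $L$. Since the fiber representation is then locally constant along $C$ and $C$ is connected, it is globally constant, which yields the claim. I would phrase this step carefully to cover the case $\dim L > 0$, since that is where naive pointwise reasoning could break down; everything else is a formal consequence of the equivariant-bundle axioms.
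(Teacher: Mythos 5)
The paper states this lemma without any proof, so there is nothing to compare against; your proposal is a correct and essentially complete justification. The first part is exactly the standard computation: the action of $g$ on the total space is a linear isomorphism $E_x \to E_{gx}$ covering $x \mapsto gx$, and the identity $g\cdot(h\cdot v) = (ghg^{-1})\cdot(g\cdot v)$ shows it intertwines $E_x$ with $E_{gx}$ viewed through the conjugation isomorphism $G_x \cong G_{gx}$. For the second part, the one technical point worth making explicit is why each isotypical summand $E|_C^{(\psi)}$ is a genuine subbundle of locally constant rank: the averaging operator obtained by integrating the $L$-action against the character of $\psi$ over $L$ with Haar measure is a continuous idempotent endomorphism of $E|_C$, and the image of a continuous idempotent endomorphism of a vector bundle is always a subbundle. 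This works uniformly for any compact $L$, so the case $\dim L > 0$ that you single out as the main obstacle is no harder than the finite case; local constancy of the multiplicities together with connectedness of $C$ then forces $\res_{L}^{G_x} E_x \cong \res_{L}^{G_{x^\prime}} E_{x^\prime}$, since finite-dimensional representations of a compact group are determined by their characters.
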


\begin{proposition}
\label{proposition: well-definedness of A_R} $p_\vect^\prime$ is
well-defined.
\end{proposition}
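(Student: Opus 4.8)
The plan is to unpack what ``well-defined'' asks for here: first, that the assignment $E \mapsto ( E_{d^{-1}}, E_{d^0}, E_{d^1} )$ depends only on the isomorphism class $[E]$; and second, that the resulting triple actually lies in $A_{G_\chi}(\RP^2, \chi)$, that is, satisfies conditions i)--iv) of Definition \ref{definition: A_R}. The first point is immediate: a $G_\chi$-isomorphism $E \cong E'$ restricts on each fiber to an isomorphism $E_x \cong E'_x$ of $(G_\chi)_x$-representations, so isomorphic bundles produce equal triples in $\Rep\big((G_\chi)_{d^{-1}}\big) \times \Rep\big((G_\chi)_{d^0}\big) \times \Rep\big((G_\chi)_{d^1}\big)$. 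I would dispose of this at the outset and then concentrate on membership.

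Among the four membership conditions, i) holds by definition, since $[E] \in \Vect_{G_\chi}(\RP^2, \chi)$ forces every $E_x$, in particular $E_{d^{-1}}$, to be $\chi$-isotypical. Condition iv) follows at once from the first assertion of Lemma \ref{lemma: elementary lemma on isotropy representation}: if $g \in G_\chi$ satisfies $g d^0 = d^1$, then $E_{d^1} \cong {}^{g} E_{d^0}$, which is exactly what iv) requires.

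The substance lies in ii) and iii), both of which I would deduce from the second assertion of Lemma \ref{lemma: elementary lemma on isotropy representation}. The key observation is that each comparison is made across a connected arc joining the two relevant points: $C(d^i) = o\big(C(\hat{d}^i)\big)$ joins $d^{-1}$ to $d^i$, while $D_R = o(\hat{D}_{\hat{R}})$ joins $d^0$ to $d^1$. By the very definition of $(G_\chi)_T$ as the pointwise stabilizer of $T$, the whole arc lies in the fixed set of the subgroup fixing it, so its two endpoints lie in the same component of that fixed set. The fixed-set-component argument behind Lemma \ref{lemma: elementary lemma on isotropy representation} then gives that the two isotropy representations, restricted to the subgroup fixing the arc, are isomorphic, which is precisely ii) for $C(d^i)$ and iii) for $D_R$.

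The main obstacle is reconciling the subgroup that literally appears in ii)/iii), namely $(G_\chi)_{C(d^i)}$ (respectively $(G_\chi)_{D_R}$), with the intersection $(G_\chi)_{d^{-1}} \cap (G_\chi)_{d^i}$ (respectively $(G_\chi)_{d^0} \cap (G_\chi)_{d^1}$) that occurs in the statement of Lemma \ref{lemma: elementary lemma on isotropy representation}. One inclusion, $(G_\chi)_{C(d^i)} \subseteq (G_\chi)_{d^{-1}} \cap (G_\chi)_{d^i}$, is trivial; the reverse inclusion, which makes the subgroups coincide and lets the lemma apply verbatim, requires that any isometry fixing both endpoints also fixes the connecting arc. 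I would establish this from the fundamental-domain construction: the arcs $C(\hat{d}^i)$ and $\hat{D}_{\hat{R}}$ are chosen as shortest paths, and their lifts are kept short enough that the shortest path between the endpoints is unique, hence preserved by any stabilizing isometry. Having recorded this uniqueness and the consequent equalities $(G_\chi)_{C(d^i)} = (G_\chi)_{d^{-1}} \cap (G_\chi)_{d^i}$ and $(G_\chi)_{D_R} = (G_\chi)_{d^0} \cap (G_\chi)_{d^1}$, the lemma applies directly; alternatively, one may simply invoke the fixed-set-component principle for the subgroup $(G_\chi)_{C(d^i)}$ itself, which is a mild restatement of the lemma's proof and sidesteps the identification entirely.
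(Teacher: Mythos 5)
Your proposal is correct and follows essentially the same route as the paper: condition i) from the definition of $\Vect_{G_\chi}(\RP^2,\chi)$, condition iv) from the first statement of Lemma \ref{lemma: elementary lemma on isotropy representation}, and conditions ii) and iii) from its second statement. The only difference is that you explicitly address the mismatch between $(G_\chi)_{C(d^i)}$ (resp.\ $(G_\chi)_{D_R}$) and the intersection of point stabilizers appearing in the lemma --- a point the paper leaves implicit --- and your resolution (the arc lies in the fixed set of its own pointwise stabilizer, so the parallel-transport argument applies to that subgroup directly) is sound.
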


\begin{proof}
To show well-definedness, we should show that $( E_{d^i} )_{i \in
I^+}$ is contained in $A_{G_\chi} ( \RP^2, \chi )$ for any $E$ in
$\Vect_{G_\chi} (\RP^2, \chi).$ By definition of $\Vect_{G_\chi}
(\RP^2, \chi),$ it is easy that $E_{d^{-1}}$ is $\chi$-isotypical
so that $( E_{d^i} )_{i \in I^+}$ satisfies Definition
\ref{definition: A_R}.i). By the first statement of Lemma
\ref{lemma: elementary lemma on isotropy representation}, $( E_{d^i}
)_{i \in I^+}$ satisfies Definition \ref{definition: A_R}.iv). The
remaining are Definition \ref{definition: A_R}.ii), iii). These are
satisfied by the second statement of Lemma \ref{lemma: elementary
lemma on isotropy representation}.
\end{proof}

\section{Relation between equivariant vector bundles
over $\RP^2$ and $S^2.$} \label{section: relation RP^2 and S^2}

In this section, we investigate the relation between semigroups
$\Vect_{G_\chi} (\RP^2, \chi)$ and $\Vect_{G_\chi \times Z} (S^2,
\chi),$ and the relation between $A_{G_\chi} (\RP^2, \chi)$ and
$A_{G_\chi \times Z} (S^2, \chi).$ By these, we can prove Theorem
\ref{main: only by isotropy}.

The first relation is easy. Pullback through the covering map gives
the semigroup homomorphism
\begin{equation}
\label{equation: isomorphism RP^2 and S^2} O : \Vect_{G_\chi}
(\RP^2) \longrightarrow \Vect_{G_\chi \times Z} (S^2), \quad E
\longmapsto o^* E.
\end{equation}
This is an isomorphism because the $Z$-action on $S^2$ is free and
$\RP^2 \cong S^2 / Z,$ see \cite[p. 132]{S}. Since equivariant
vector bundles over $S^2$ have been classified in \cite{Ki}, we can
classify equivariant vector bundles over $\RP^2$ by using this
isomorphism. This is the reason why $\hatK_{\hat{R}}$ of Table
\ref{table: introduction} is defined as the simplicial complex
$\complexK_{\hat{R}}$ of \cite[Table 1.1]{Ki}. In \cite[Section
3]{Ki}, it is shown that the $(G_\chi \times Z)$-action on $\R^3$
through $\hat{\rho}$ preserves $\hatK_{\hat{R}}$ and its underlying
space $|\hatK_{\hat{R}}|.$ Also, it is shown that the $(G_\chi
\times Z)$-orbit of $\hat{D}_{\hat{R}}$ covers the underlying space
$|\hatK_{\hat{R}}^{(1)}|$ of the 1-skeleton $\hatK_{\hat{R}}^{(1)},$
and that $\hat{D}_{\hat{R}}$ is a minimal path satisfying such a
property.

Next, we would show that $A_{G_\chi} ( \RP^2, \chi )$ is isomorphic
to $A_{G_\chi \times Z} ( S^2, \chi ).$ For this, we state an easy
lemma on isotropy subgroups of $G_\chi$ and $G_\chi \times Z.$ For
any $\hat{x} \in S^2$ and $x=o(\hat{x}),$ let
\begin{equation*}
p_1 : G_\chi \times Z \rightarrow G_\chi \qquad \text{ and } \qquad
p_{1, \hat{x}} : (G_\chi \times Z)_{\hat{x}} \rightarrow (G_\chi)_x
\end{equation*}
be the projection to the first components. And, let $p_{1,
\hat{x}}^* : R \Big((G_\chi)_x \Big) \rightarrow R \Big((G_\chi
\times Z)_{\hat{x}} \Big)$ be the isomorphism sending a
$(G_\chi)_x$-representation $W$ to $W$ itself regarded as a $(G_\chi
\times Z)_{\hat{x}}$-representation through $p_{1, \hat{x}}.$

\begin{lemma} \label{lemma: stabilizer correspondence}
For any points $\hat{x}, \hat{y}$ in $S^2$ and their images
$x=o(\hat{x}),$ $y=o(\hat{y})$ in $\RP^2,$ the following hold
\begin{enumerate}
  \item $p_1 \Big( (G_\chi \times Z)_{\hat{x}} \Big) = (G_\chi)_x$ and
  $p_1 \Big|_{(G_\chi \times Z)_{\hat{x}}}$ is injective,
  \item $p_1 \Big( (G_\chi \times Z )_{C(\hat{d}^i)}\Big) =
  (G_\chi)_{C(d^i)}$ and $p_1 \Big|_{(G_\chi \times Z)_{C(\hat{d}^i)}}$ is
  injective for $i \in I,$
  \item $p_1 \Big( (G_\chi \times Z)_{\hat{D}_{\hat{R}}} \Big) =
  (G_\chi)_{D_R}$ and $p_1 \Big|_{(G_\chi \times Z)_{\hat{D}_{\hat{R}}}}$ is
  injective,
  \item $p_{1, \hat{x}}^* (E_x) \cong (o^* E)_{\hat{x}}$ as
  $(G_\chi \times Z)_{\hat{x}}$-representations for each $E$ in the
  set $\Vect_{G_\chi} (\RP^2, \chi).$
\end{enumerate}
\end{lemma}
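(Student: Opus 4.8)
The plan is to prove the three stabilizer identities (1)--(3) by a single uniform argument, since the sets $\{\hat{x}\}$, $C(\hat{d}^i)$, and $\hat{D}_{\hat{R}}$ are all nonempty and connected, and then to obtain (4) directly from the definition of the equivariant pullback. The one structural fact I will use throughout is that $o$ is the quotient by $Z$, which acts on $S^2$ through $\hat{\rho}$ by the fixed-point free antipodal map $g_0 = -\id$, and that $o$ is equivariant with respect to $p_1$ in the sense that $o\bigl((g,z)\cdot\hat{p}\bigr) = p_1(g,z)\cdot o(\hat{p})$ for all $(g,z) \in G_\chi \times Z$ and $\hat{p} \in S^2$ (the $Z$-factor is killed by $o$ because $\bar{\rho}(g) \in \SO(3)$ descends to $\rho(g) = \imath(\bar{\rho}(g))$ and $g_0$ lies in the deck group).

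For the common step, let $T$ be any nonempty connected subset of $S^2$ and $T' = o(T)$. I claim $p_1\bigl((G_\chi\times Z)_T\bigr) = (G_\chi)_{T'}$ with $p_1|_{(G_\chi\times Z)_T}$ injective. The inclusion $\subseteq$ is immediate from the equivariance of $o$: if $(g,z)$ fixes $T$ pointwise then $g$ fixes $o(T) = T'$ pointwise. For $\supseteq$, take $g \in (G_\chi)_{T'}$; then for each $\hat{p}\in T$ the point $\bar{\rho}(g)\hat{p}$ lies in the fiber $o^{-1}\bigl(o(\hat{p})\bigr) = \{\hat{p}, -\hat{p}\}$, so the rule $\hat{p}\mapsto \pm 1$ recording which of the two occurs is well-defined; since antipodal points of $S^2$ are never close, this rule is locally constant, hence constant $=\epsilon$ on the connected set $T$. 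If $\epsilon = +1$ then $(g,\id)$ fixes $T$ pointwise, and if $\epsilon = -1$ then $(g,g_0)$ does (as $\bar{\rho}(g)g_0\hat{p} = \bar{\rho}(g)(-\hat{p}) = -\bar{\rho}(g)\hat{p} = \hat{p}$); in either case $g \in p_1\bigl((G_\chi\times Z)_T\bigr)$. Injectivity follows because $\ker p_1 = \{\id\}\times Z$ meets $(G_\chi\times Z)_T$ trivially: its only nontrivial element $(\id, g_0)$ acts as the fixed-point free antipodal map and so cannot fix the nonempty set $T$. Applying this with $T = \{\hat{x}\}$, $T = C(\hat{d}^i)$, and $T = \hat{D}_{\hat{R}}$ --- each connected and nonempty, the latter two being shortest paths --- yields (1), (2), (3) at once.

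For (4), I will unwind the definition of the equivariant pullback $o^*E$. Because $o$ is equivariant with respect to $p_1$, the bundle $o^*E$ carries the $(G_\chi\times Z)$-action $(g,z)\cdot(\hat{p},v) = \bigl((g,z)\hat{p},\, g\cdot v\bigr)$ on its total space $\{(\hat{p},v) : v \in E_{o(\hat{p})}\}$, and its fiber at $\hat{x}$ is canonically $E_x$ as a vector space. For $(g,z)\in (G_\chi\times Z)_{\hat{x}}$ the induced map on this fiber is $v\mapsto g\cdot v$, i.e. the action of $g = p_{1,\hat{x}}(g,z)$ on $E_x$; this is exactly the representation $p_{1,\hat{x}}^*(E_x)$, giving the claimed isomorphism.

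I expect the only point requiring genuine care to be the surjectivity in the common step, where one must guarantee that a \emph{single} deck element $z$ works simultaneously for the whole set $T$; this is where the connectedness of $C(\hat{d}^i)$ and $\hat{D}_{\hat{R}}$ enters, via the local constancy of the $\pm 1$ rule. Everything else is a direct consequence of the freeness of the $Z$-action and the $p_1$-equivariance of $o$.
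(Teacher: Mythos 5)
Your proof is correct and takes essentially the same route as the paper's: lift $g\in (G_\chi)_x$ to one of $(g,\id)$ or $(g,g_0)$ using the two-point fibers of $o$, get injectivity from the freeness of the $Z$-action on $S^2$, and read off (4) from the definition of the pullback. The only place you go beyond the paper is in making explicit the local-constancy/connectedness argument guaranteeing that a \emph{single} sign $\epsilon$ works simultaneously on all of $C(\hat{d}^i)$ or $\hat{D}_{\hat{R}}$ --- the paper merely states that (2) and (3) follow by ``applying (1) to each point,'' leaving that uniformity implicit, so your version is a welcome tightening rather than a different approach.
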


\begin{proof}
(1) It is easy that $p_1 \Big( (G_\chi \times Z)_{\hat{x}} \Big)
\subset (G_\chi)_x.$ For each $g \in (G_\chi)_x,$ one of $(g, \id)$
and $(g, g_0)$ fixes $\hat{x}$ because $g \cdot \hat{x} = \pm
\hat{x}.$ So, $p_1 \Big( (G_\chi \times Z)_{\hat{x}} \Big) =
(G_\chi)_x$ is obtained. Now, we prove injectivity of $p_1
\Big|_{(G_\chi \times Z)_{\hat{x}}}.$ Pick arbitrary two elements
$(g, g_0^j)$ and $(g^\prime, g_0^{j^\prime})$ of $(G_\chi \times
Z)_{\hat{x}}$ for $j, j^\prime \in \Z_2.$ If $p_1 \big( g, g_0^j
\big) = p_1 \big( g^\prime, g_0^{j^\prime} \big),$ i.e.
$g=g^\prime,$ then $j$ should be equal to $j^\prime.$ This shows
injectivity.

(2) and (3) are obtained by applying (1) to each point $\hat{x}$ in
$C(\hat{d}^i)$ and $\hat{D}_{\hat{R}}.$

(4) is easy by definition of pullback.
\end{proof}

\begin{proposition} \label{proposition: pullback isomorphism for A_R}
Let $P_1 : A_{G_\chi} ( \RP^2, \chi ) \rightarrow A_{G_\chi \times
Z} ( S^2, \chi )$ be the map defined as
\begin{equation*}
\big( W_{d^i} \big)_{i \in I^+} \longmapsto \big( p_{1, \hat{d}^i}^*
W_{d^i} \big)_{i \in I^+}.
\end{equation*}
Then, $P_1$ is bijective.
\end{proposition}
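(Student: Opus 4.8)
The plan is to show that $P_1$ is a bijection by exhibiting an explicit inverse, namely the map induced by the family of isomorphisms $(p_{1,\hat{d}^i}^*)^{-1}$ on representation rings. The key structural fact is that each $p_{1,\hat{d}^i} : (G_\chi \times Z)_{\hat{d}^i} \to (G_\chi)_{d^i}$ is an isomorphism of groups, which is precisely Lemma \ref{lemma: stabilizer correspondence}(1) (surjectivity together with injectivity); consequently each $p_{1,\hat{d}^i}^*$ is a bijection between the respective sets of representations. So the only real content is that $P_1$ carries elements satisfying the defining conditions i)--iv) of $A_{G_\chi}(\RP^2,\chi)$ to elements satisfying the corresponding conditions for $A_{G_\chi\times Z}(S^2,\chi)$, and that the inverse does likewise. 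I would prove both directions at once by checking that each of the four conditions is \emph{equivalent} before and after applying $P_1$, using the compatibility of the projection maps on the various subgroups.

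First I would verify that $P_1$ is well-defined, i.e. lands in $A_{G_\chi\times Z}(S^2,\chi)$. For condition i), $\chi$-isotypicality of $p_{1,\hat{d}^{-1}}^* W_{d^{-1}}$ follows from that of $W_{d^{-1}}$ since $p_{1,\hat{d}^{-1}}$ restricts to the identity on $H$ (as $H \subset G_\chi$ sits inside $G_\chi \times Z$ via $h \mapsto (h,\id)$, because $H$ acts trivially on $\RP^2$ and hence lifts with trivial $Z$-component). For conditions ii) and iii), the point is that the restriction maps commute with the projection: by Lemma \ref{lemma: stabilizer correspondence}(2) and (3), $p_1$ identifies $(G_\chi\times Z)_{C(\hat{d}^i)}$ with $(G_\chi)_{C(d^i)}$ and $(G_\chi\times Z)_{\hat{D}_{\hat{R}}}$ with $(G_\chi)_{D_R}$, so restricting $p_{1,\hat{d}^i}^* W_{d^i}$ to the smaller subgroup is the same as applying $p_{1,\cdot}^*$ to the restriction of $W_{d^i}$; the required isomorphisms then transport verbatim. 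For condition iv), if $g \in G_\chi$ satisfies $g d^0 = d^1$, then one of $(g,\id),(g,g_0)$ sends $\hat{d}^0$ to $\hat{d}^1$ in $S^2$, and conjugation by that lift intertwines $p_{1,\hat{d}^0}^*$ and $p_{1,\hat{d}^1}^*$, so $^gW_{d^0}\cong W_{d^1}$ yields $^{\tilde g}(p_{1,\hat{d}^0}^*W_{d^0})\cong p_{1,\hat{d}^1}^*W_{d^1}$.

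Next I would check injectivity and surjectivity. Injectivity is immediate: since each $p_{1,\hat{d}^i}^*$ is itself injective on representation rings, equality of the image triples forces equality of the original triples entrywise. For surjectivity, given $(V_{\hat{d}^i})_{i\in I^+} \in A_{G_\chi\times Z}(S^2,\chi)$, I would set $W_{d^i} = (p_{1,\hat{d}^i}^*)^{-1} V_{\hat{d}^i}$ and verify that $(W_{d^i})_{i\in I^+}$ satisfies i)--iv). Each verification is the exact reverse of the well-definedness argument above, again using that $p_1$ matches up the relevant subgroups on both spaces, so the restriction and conjugation conditions pull back through $(p_{1,\hat{d}^i}^*)^{-1}$. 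This shows $P_1\circ\big((p_{1,\hat{d}^i}^*)^{-1}\big) = \id$ and conversely, so $P_1$ is bijective.

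I expect the main obstacle to be purely bookkeeping rather than conceptual: one must be careful that conditions ii) and iii) involve restriction to subgroups attached to the paths $C(d^i)$ and the domain $D_R$, and that the identifications of Lemma \ref{lemma: stabilizer correspondence}(2)--(3) are genuinely \emph{compatible} with the pointwise identification $p_{1,\hat{d}^i}$ at the endpoints --- that is, the inclusion $(G_\chi)_{C(d^i)} \hookrightarrow (G_\chi)_{d^i}$ corresponds under $p_1$ to the inclusion $(G_\chi\times Z)_{C(\hat{d}^i)} \hookrightarrow (G_\chi\times Z)_{\hat{d}^i}$. Once this commutativity of the restriction squares is recorded, every condition transfers mechanically and the two maps are mutually inverse. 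The whole argument is therefore a clean diagram chase assembled from Lemma \ref{lemma: stabilizer correspondence}, with no analytic or topological input beyond what is already established.
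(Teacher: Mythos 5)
Your proposal is correct and follows essentially the same route as the paper: both verify that $P_1$ lands in $A_{G_\chi\times Z}(S^2,\chi)$ by transporting conditions i)--iii) through Lemma \ref{lemma: stabilizer correspondence} and condition iv) by lifting $g'$ to one of $(g',\id),(g',g_0)$, and both obtain the inverse by running the identical argument backwards through the entrywise bijections $p_{1,\hat{d}^i}^*$. Your write-up merely makes explicit the bookkeeping (compatibility of the restriction squares, the trivial-$Z$-component lift of $H$) that the paper compresses into ``Similarly, we can also show that $P_1$ has an inverse.''
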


\begin{proof}
For each $\mathbf{W} = ( W_{d^{-1}}, W_{d^0}, W_{d^1} )$ in
$A_{G_\chi} ( \RP^2, \chi ),$ Lemma \ref{lemma: stabilizer
correspondence} says that the element $P_1 ( \mathbf{W} )$ satisfies
Definition \ref{definition: A_R}.i)$\sim$iii). Put $W_{\hat{d}^i} =
p_{1, \hat{d}^i}^* (W_{d^i})$ for $i \in I.$ It is easily shown that
there exists $g$ in $G_\chi \times Z$ such that $g \hat{d}^0 =
\hat{d}^1$ if and only if there exists $g^\prime$ in $G_\chi$ such
that $g^\prime d^0 = d^1$ where $g, g^\prime$ satisfy $p_1 (g) =
g^\prime.$ Then, we can also show that if $^{g^\prime} W_{d^0} \cong
W_{d^1}$ holds, then $^g W_{\hat{d}^0} \cong W_{\hat{d}^1}.$ So,
$P_1 ( \mathbf{W} )$ satisfies Definition \ref{definition: A_R}.iv),
and $P_1 ( \mathbf{W} )$ is contained in $A_{G \times Z} ( S^2, \chi
).$ Similarly, we can also show that $P_1$ has an inverse.
\end{proof}

Now, we can prove Theorem \ref{main: only by isotropy}.

\begin{proof}[Proof of Theorem \ref{main: only by isotropy}]
Observe that $p_\vect^\prime = P_1^{-1} \circ p_\vect \circ O$ where
the map $p_\vect$ on $\Vect_{G_\chi \times Z} (S^2, \chi)$ is
defined in \cite{Ki}. Note that $O,$ $p_\vect,$ $P_1$ are all
isomorphisms by (\ref{equation: isomorphism RP^2 and S^2}),
\cite[Theorem C]{Ki}, Proposition \ref{proposition: pullback
isomorphism for A_R}, respectively. Therefore, $p_\vect^\prime$ is
also an isomorphism.

\begin{equation} \label{equation: p_vect commute}
\SelectTips{cm}{} \xymatrix{ \Vect_{G_\chi} (\RP^2, \chi)
\ar[r]^-{O}
\ar[d]_-{p_\vect^\prime} & \Vect_{G_\chi \times Z} (S^2, \chi) \ar[d]^-{p_\vect} \\
A_{G_\chi} ( \RP^2, \chi )   \ar[r]^-{P_1}
  &  A_{G_\chi \times Z} ( S^2, \chi ) }
\end{equation}
\end{proof}

\section{Proof of Theorem \ref{main: by isotropy and chern}} \label{section: proof of Theorem
B}

In this section, we deal with the case of $\bar{R} = \Z_n$ with odd
$n.$ In this case, we can check that
\begin{equation*}
R_{d^{-1}} \cong \Z_n \qquad \text{and} \qquad R_{d^0} = R_{d^1} =
\langle \id \rangle.
\end{equation*}
Pick an arbitrary $\mathbf{W} = (W_{d^i})_{i \in I^+}$ in
$A_{G_\chi} ( \RP^2, \chi ).$ By \cite[Theorem B]{Ki}, the map
$p_\vect$ of diagram (\ref{equation: p_vect commute}) is surjective
and $p_\vect^{-1} \Big( P_1(\mathbf{W}) \Big)$ consists of two
elements. Also since $O$ and $P_1$ are isomorphic by (\ref{equation:
isomorphism RP^2 and S^2}) and Proposition \ref{proposition:
pullback isomorphism for A_R}, the diagram (\ref{equation: p_vect
commute}) shows that $p_\vect^\prime$ is surjective and
$p_\vect^{\prime -1} \big( \mathbf{W} \big)$ consists of two
elements. To prove Theorem \ref{main: by isotropy and chern}, we
should calculate Chern classes of these two. Pick an element $g_1$
of $G_\chi$ satisfying $\bar{\rho} (g_1) = a_n.$ To calculate Chern
classes, we would describe bundles in $p_\vect^{\prime -1} \big(
\mathbf{W} \big)$ as equivariant clutching construction. For this,
we first express $\RP^2 \cong |\hatK_{\hat{R}}|/Z$ as the quotient
of more simple equivariant simplicial complex. To use results of
\cite{Ki}, we do similar things for $p_\vect^{-1} \Big(
P_1(\mathbf{W}) \Big)$ and $S^2 \cong |\hatK_{\hat{R}}|.$

\begin{figure}[ht!]
\begin{center}
\begin{pspicture}(-5.5,-3.5)(4.5,3.7)\footnotesize

\psline[linewidth=0.5pt](-3.5,3.5)(-5,1)(-4.5,0.5)(-2.5,0.5)(-2,1)(-3.5,3.5)
\psline[linewidth=0.5pt](-3.5,3.5)(-4.5,0.5)(-2.5,0.5)(-3.5,3.5)
\psline[linewidth=0.5pt,linestyle=dotted](-5,1)(-4.5,1.5)(-2.5,1.5)(-2,1)
\psline[linewidth=0.5pt,linestyle=dotted](-3.5,3.5)(-4.5,1.5)(-2.5,1.5)(-3.5,3.5)

\uput[d](-4.5,0.6){$\bar{v}_0^N$} \uput[d](-2.5,0.6){$\bar{v}_1^N$}
\uput[r](-2.1,1){$\bar{v}_2^N$} \uput[ur](-2.5,1.5){$\bar{v}_3^N$}
\uput[ul](-4.5,1.5){$\bar{v}_4^N$} \uput[l](-4.9,1){$\bar{v}_5^N$}
\uput[d](-3.5,0.6){$\bar{e}_0^N$}
\uput[dr](-2.35,0.85){$\bar{e}_1^N$}
\uput[u](-1.9,1.1){$\bar{e}_2^N$} \uput[u](-3.5,1.4){$\bar{e}_3^N$}
\uput[u](-5.1, 1.1){$\bar{e}_4^N$}
\uput[dl](-4.65,0.85){$\bar{e}_5^N$} \uput[u](-3.5,3.5){$N$}

\uput[dl](-2,3.5){$\lineK_{2n}^N$}

\psline[linewidth=0.5pt](-5,-1)(-4.5,-1.5)(-2.5,-1.5)(-2,-1)(-2.5,-0.5)(-4.5,-0.5)(-5,-1)
\psline[linewidth=0.5pt](-3.5,-3.5)(-5,-1)
\psline[linewidth=0.5pt](-3.5,-3.5)(-4.5,-1.5)
\psline[linewidth=0.5pt](-3.5,-3.5)(-2.5,-1.5)
\psline[linewidth=0.5pt](-3.5,-3.5)(-2,-1)

\psline[linewidth=0.5pt](-4.5,-0.5)(-4.16666, -1.5)
\psline[linewidth=0.5pt](-2.5,-0.5)(-2.83333, -1.5)
\psline[linewidth=0.5pt,linestyle=dotted](-3.5,-3.5)(-4.16666, -1.5)
\psline[linewidth=0.5pt,linestyle=dotted](-3.5,-3.5)(-2.83333, -1.5)

\uput[u](-4.5,-0.6){$\bar{v}_4^S$}
\uput[u](-2.5,-0.6){$\bar{v}_3^S$} \uput[r](-2.1,-1){$\bar{v}_2^S$}
\uput[dr](-2.5,-1.5){$\bar{v}_1^S$}
\uput[dl](-4.5,-1.5){$\bar{v}_0^S$} \uput[l](-4.9,-1){$\bar{v}_5^S$}
\uput[u](-3.5,-0.6){$\bar{e}_3^S$}
\uput[ur](-2.35,-0.85){$\bar{e}_2^S$}
\uput[d](-1.9,-1.1){$\bar{e}_1^S$}
\uput[d](-3.5,-1.4){$\bar{e}_0^S$}
\uput[d](-5.1,-1.1){$\bar{e}_5^S$}
\uput[ul](-4.65,-0.85){$\bar{e}_4^S$} \uput[d](-3.5,-3.5){$S$}
\uput[dl](-2,-2.5){$\lineK_{2n}^S$}

\psline[linearc=1]{->}(-0.8, -0.5)(0.8, -0.5) \uput[u](0,
-0.5){$\pi$}

\psline[linewidth=0.5pt](3.5,2.5)(2,0)(2.5,-0.5)(4.5,-0.5)(5,0)(3.5,2.5)
\psline[linewidth=0.5pt](3.5,2.5)(2.5,-0.5)(4.5,-0.5)(3.5,2.5)
\psline[linewidth=0.5pt,linestyle=dotted](2,0)(2.5,0.5)(4.5,0.5)(5,0)
\psline[linewidth=0.5pt,linestyle=dotted](3.5,2.5)(2.5,0.5)(4.5,0.5)(3.5,2.5)

\uput[ur](4.5,0.5){$v_3^N$} \uput[ul](2.5,0.5){$v_4^N$}
\uput[u](5.1,0.1){$e_2^N$} \uput[u](3.5,0.4){$e_3^N$}
\uput[u](1.9,0.1){$e_4^N$}

\uput[u](3.5,2.5){$N$}

\psline[linewidth=0.5pt](3.5,-2.5)(2,0)
\psline[linewidth=0.5pt](3.5,-2.5)(2.5,-0.5)
\psline[linewidth=0.5pt](3.5,-2.5)(4.5,-0.5)
\psline[linewidth=0.5pt](3.5,-2.5)(5,0)

\psline[linewidth=0.5pt,linestyle=dotted](2.5,0.5)(3.5,-2.5)
\psline[linewidth=0.5pt,linestyle=dotted](4.5,0.5)(3.5,-2.5)

\uput[r](4.9,0){$v_2^S$} \uput[dr](4.5,-0.5){$v_1^S$}
\uput[dl](2.5,-0.5){$v_0^S$} \uput[l](2.1,0){$v_5^S$}
\uput[d](5.1,-0.1){$e_1^S$} \uput[d](3.5,-0.4){$e_0^S$}
\uput[d](1.9,-0.1){$e_5^S$} \uput[d](3.5,-2.5){$S$}
\uput[dl](5,-1.5){$\complexK_{2n}$}

\end{pspicture}
\end{center}
\caption{\label{figure: K_6} $\pi : \lineK_{2n} \rightarrow
\complexK_{2n}$ when $n=3$}
\end{figure}
In Table \ref{table: introduction}, $\hatK_{\hat{R}} =
\complexK_{2n}.$ Denote by $\complexK_{2n}^S$ the lower simplicial
subcomplex of $\complexK_{2n}$ such that
\begin{equation*}
|\complexK_{2n}^S| ~ = ~ |\complexK_{2n}| ~ \cap ~ \Big\{ ~ (x,y,z)
\in \R^3 ~ \big| ~ z \le 0 ~ \Big\},
\end{equation*}
and by $\complexK_{2n}^N$ the upper part. Denote by $B$ the set $\{
S, N \},$ and by $\lineK_{2n}$ the disjoint union $\amalg_{q \in B}
~ \complexK_{2n}^q.$ Denote $\complexK_{2n}^q \subset \lineK_{2n}$
by $\lineK_{2n}^q.$ The $(G_\chi \times Z)$-action and its
restricted $G_\chi$-action on $\complexK_{2n}$ induce $(G_\chi
\times Z)$- and $G_\chi$-actions on $\lineK_{2n}$ and
$|\lineK_{2n}|$ where the $G_\chi$-action on $\lineK_{2n}$ preserves
$\lineK_{2n}^q.$ Then, $\complexK_{2n}$ can be regarded as a
quotient of $\lineK_{2n}.$ This is expressed by the simplicial map
$\pi : \lineK_{2n} \rightarrow \complexK_{2n}$ such that $\pi
|_{\complexK_{2n}^q}$ is the identity to $\complexK_{2n}^q$ for $q
\in B.$ Denote by $|\pi| : |\lineK_{2n}| \rightarrow
|\complexK_{2n}|$ the underlying space map of $\pi.$ Easily, $\pi$
and $|\pi|$ are $(G_\chi \times Z)$-maps. Here, we introduce the
following notations:
\begin{align*}
\bar{P}_{2n}^q       &= |\lineK_{2n}^q| \cap |\pi|^{-1} ( P_{2n} ),  \\
\bar{P}_{2n}   &= \bar{P}_{2n}^S \cup \bar{P}_{2n}^N,  \\
\bar{v}_q^i          &= \lineK_{2n}^q \cap \pi^{-1} ( v^i ),  \\
\bar{e}_q^i          &= \lineK_{2n}^q \cap \pi^{-1} ( e^i )
\end{align*}
for $q \in B$ and $i \in I.$ Also, let $c : \bar{P}_{2n} \rightarrow
\bar{P}_{2n}$ be the map satisfying $c(\bar{x}) \ne \bar{x}$ and
$|\pi| ( \bar{x} ) = |\pi| ( c(\bar{x}) )$ for each $\bar{x} \in
\bar{P}_{2n}.$ It is easy that $c$ is $(G_\chi \times
Z)$-equivariant. In this time, we do the similar thing for
$|\complexK_{2n}|/Z.$
\begin{equation} \label{equation: simplicial complices}
\SelectTips{cm}{} \xymatrix{ |\lineK_{2n}| \ar[r]^-{\bar{o}}
\ar[d]_-{|\pi|} & |\complexK_R| \ar[d]^-{|\pi^\prime|} \\
|\complexK_{2n}|   \ar[r]^-{o}  &  |\complexK_{2n}|/Z }
\end{equation}
Put $\complexK_R = \lineK_{2n}^S.$ Let $|\pi^\prime| : |\complexK_R|
\rightarrow |\complexK_{2n}|/Z$ be the map defined by $\bar{x}
\mapsto \big( o \circ |\pi| \big) (\bar{x}).$ Then, $|\pi^\prime|$
is a cellular $G_\chi$-map, and we consider $|\complexK_{2n}|/Z$ as
the quotient of $|\complexK_R|$ through $|\pi^\prime|.$ Also, let
$\bar{o} : |\lineK_{2n}| \rightarrow |\complexK_R|$ be the map
defined by
\begin{equation*}
\bar{o} (\bar{x}) = \left\{
  \begin{array}{ll}
    \bar{x}        & \hbox{for } \bar{x} \in |\lineK_{2n}^S|, \\
    g_0 \bar{x}    & \hbox{for } \bar{x} \in |\lineK_{2n}^N|.
  \end{array}
\right.
\end{equation*}
So defined maps $|\pi^\prime|$ and $\bar{o}$ satisfy the commutative
diagram (\ref{equation: simplicial complices}).

Now, we describe equivariant vector bundles over
$|\complexK_{2n}|/Z$ as an equivariant clutching construction of an
equivariant vector bundle over $|\complexK_R|,$ and then do the
similar thing for equivariant vector bundle over $|\complexK_{2n}|.$
Put $\mathbf{W} = ( W_{d^{-1}}, W_{d^0}, W_{d^1} ).$ Let $F$ be the
equivariantly trivial $G_\chi$-bundle $|\complexK_R| \times
W_{d^{-1}}$ over $|\complexK_R|.$ Here, note that $|\pi^\prime| (S)
= d^{-1},$ and that both $(G_\chi)_S$ and $(G_\chi)_{d^{-1}}$ are
equal to $G_\chi.$ Then, we can show that
\begin{equation}
\label{equation: trivialization 1} F \cong |\pi^\prime|^* E^\prime
\qquad \text{ for each } E^\prime \in p_\vect^{\prime
-1}(\mathbf{W})
\end{equation}
because $\Big( |\pi^\prime|^* E^\prime \Big)_S \cong
E_{d^{-1}}^\prime \cong W_{d^{-1}}$ and $|\complexK_R|$ is
equivariant homotopically equivalent to the one point set $\{ S \}
\subset |\complexK_R|.$ Let $\Phi : \bar{P}_{2n}^S \rightarrow
\Iso(W_{d^{-1}})$ be an arbitrary continuous map which is called a
\textit{preclutching map} with respect to $F$ where $\Iso$ means the
set of nonequivariant isomorphisms. We try to glue $F$ along
$\bar{P}_{2n}^S$
\begin{equation*}
\bar{P}_{2n}^S \times W_{d^{-1}} \longrightarrow \bar{P}_{2n}^S
\times W_{d^{-1}}, \qquad (\bar{x}, u) \mapsto \Big( g_0 c( \bar{x}
), \Phi (\bar{x}) u \Big)
\end{equation*}
for $\bar{x} \in \bar{P}_{2n}^S$ and $u \in W_{d^{-1}}$ via a
preclutching map $\Phi.$ This gluing gives an equivariant vector
bundle in $p_\vect^{\prime -1}(\mathbf{W})$ if and only if the
following two conditions hold:
\begin{enumerate}
  \item[E1$^\prime$.] $\Phi \big( g_0 c( \bar{x} ) \big) = \Phi (\bar{x})^{-1}$
  for each $\bar{x} \in \bar{P}_{2n}^S,$
  \item[E2$^\prime$.] $\Phi (g \bar{x}) = g \Phi ( \bar{x}) g^{-1} ~$ for each
  $g \in G_\chi$ and $\bar{x} \in \bar{P}_{2n}^S.$
\end{enumerate}
A preclutching map satisfying these two conditions is called an
\textit{equivariant clutching map} with respect to $F.$ By
(\ref{equation: trivialization 1}), two elements of $p_\vect^{\prime
-1}(\mathbf{W})$ can be constructed in this way. We do the similar
thing for equivariant vector bundle over $|\complexK_{2n}|.$ Let
$\bar{F}$ be the $(G_\chi \times Z)$-bundle $\bar{o}^* F$ over
$|\lineK_{2n}|.$ Easily, $\bar{F}$ is equal to the equivariantly
trivial bundle $|\lineK_{2n}| \times p_1^* W_{d^{-1}}.$ The $(G_\chi
\times Z)$-action on it is expressed by
\begin{align*}
g \cdot ( \bar{x}, u )    &= ( g \bar{x}, g u ),  \\
g_0 \cdot ( \bar{x}, u )  &= ( g_0 \bar{x}, u )
\end{align*}
for $g \in G_\chi,$ $\bar{x} \in |\complexK_{2n}|,$ $u \in p_1^*
W_{d^{-1}}.$ And, we can show that
\begin{equation}
\label{equation: trivialization 2} \bar{F} \cong |\pi|^* E \qquad
\text{ for each } E \in p_\vect^{-1} \big( P_1(\mathbf{W}) \big)
\end{equation}
because  $|\lineK_{2n}|$ is equivariant homotopically equivalent to
the set $B \subset |\lineK_{2n}|.$ Let $\bar{\Phi} : \bar{P}_{2n}
\rightarrow \Iso(p_1^* W_{d^{-1}})$ be an arbitrary continuous map.
We call it a \textit{preclutching map} with respect to $\bar{F}.$ We
try to glue $\bar{F}$ along $\bar{P}_{2n}$
\begin{equation*}
\bar{P}_{2n} \times p_1^* W_{d^{-1}} \longrightarrow \bar{P}_{2n}
\times p_1^* W_{d^{-1}}, \qquad (\bar{x}, u) \mapsto ( c (\bar{x}),
\bar{\Phi} (\bar{x}) u )
\end{equation*}
for $\bar{x} \in \bar{P}_{2n}$ and $u \in p_1^* W_{d^{-1}}$ via a
preclutching map $\bar{\Phi}.$ This gluing gives an equivariant
vector bundle in $p_\vect^{-1} \big( P_1(\mathbf{W}) \big)$ if and
only if the following two conditions hold:
\begin{enumerate}
  \item[E1.] $\bar{\Phi} (c(\bar{x})) = \bar{\Phi} (\bar{x})^{-1}~$
  for each $\bar{x} \in \bar{P}_{2n},$
  \item[E2.] $\bar{\Phi} (\bar{g} \bar{x}) = \bar{g} \bar{\Phi}
  ( \bar{x}) \bar{g}^{-1}$ for each
  $\bar{g} \in G_\chi \times Z$ and $\bar{x} \in \bar{P}_{2n}.$
\end{enumerate}
A preclutching map satisfying these two conditions is called an
\textit{equivariant clutching map} with respect to $\bar{F}.$  Note
that equivariant clutching maps $\bar{\Phi},$ $\Phi$ with respect to
$\bar{F},$ $F$ are actually $\Iso_H (p_1^* W_{d^{-1}}),$ $\Iso_H
(W_{d^{-1}})$-valued by equivariance, respectively. Here, $\Iso_H
(\cdot)$ is the subgroup of $H$-equivariant elements in $\Iso
(\cdot).$ Also since
\begin{equation*}
\res_H^{G_\chi \times Z} p_1^* W_{d^{-1}} = \res_H^{G_\chi}
W_{d^{-1}},
\end{equation*}
we have $\Iso_H (p_1^* W_{d^{-1}})=\Iso_H (W_{d^{-1}}).$ Details on
equivariant clutching construction can be found in \cite[Section
4]{Ki}.

Let $\Omega_{\bar{F}}$ and $\Omega_F$ be sets of equivariant
clutching maps with respect to $\bar{F}$ and $F,$ respectively. To
use results of \cite{Ki}, we need relate $\Omega_F$ to
$\Omega_{\bar{F}}.$ Consider the map
\begin{equation*}
q_\Omega : \Omega_{\bar{F}} \rightarrow \Omega_F, \quad \bar{\Phi}
\longmapsto \Phi(\bar{x}) = g_0 \bar{\Phi}(\bar{x})
\end{equation*}
for $\bar{x} \in \bar{P}_{2n}^S.$ Then, we have the following:

\begin{lemma} \label{lemma: Omega correspondence}
$q_{\Omega}$ is a well-defined one-to-one correspondence.
\end{lemma}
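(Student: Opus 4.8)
The plan is to exploit the two structural facts recorded just above the statement: that $\bar{o}$ is $(G_\chi \times Z)$-equivariant with $\bar{F} = \bar{o}^* F$, and that in the resulting trivialization $\bar{F} = |\lineK_{2n}| \times p_1^* W_{d^{-1}}$ the generator $g_0$ acts trivially on the fiber. Together with the geometric observations that $g_0$ carries $\bar{P}_{2n}^S$ onto $\bar{P}_{2n}^N$, and that $g_0 c(\bar{x})$ lies back in $\bar{P}_{2n}^S$ as the antipode of $\bar{x}$, every verification below reduces to a direct substitution into conditions E1, E2, E1$^\prime$, E2$^\prime$.

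First I would check well-definedness, i.e.\ that $\Phi = q_\Omega(\bar{\Phi})$ lies in $\Omega_F$. For E2$^\prime$, given $g \in G_\chi$ and $\bar{x} \in \bar{P}_{2n}^S$ one has $g\bar{x} \in \bar{P}_{2n}^S$ since the $G_\chi$-action preserves $\lineK_{2n}^S$, and E2 applied to $\bar{g} = g$ gives $\bar{\Phi}(g\bar{x}) = g\bar{\Phi}(\bar{x})g^{-1}$; since $g_0$ acts trivially on the fiber it commutes with $g$ there, and multiplying by $g_0$ yields $\Phi(g\bar{x}) = g\Phi(\bar{x})g^{-1}$. For E1$^\prime$, note $g_0 c(\bar{x}) \in \bar{P}_{2n}^S$, write $\Phi(g_0 c(\bar{x})) = g_0 \bar{\Phi}(g_0 c(\bar{x}))$, use E2 with $\bar{g} = g_0$ to replace $\bar{\Phi}(g_0 c(\bar{x}))$ by $\bar{\Phi}(c(\bar{x}))$, and then E1 to turn this into $\bar{\Phi}(\bar{x})^{-1}$; comparing with $\Phi(\bar{x})^{-1}$, again using triviality of $g_0$ on the fiber, gives the claim.

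Next I would prove bijectivity by exhibiting the inverse. For injectivity, the key point is that an equivariant clutching map $\bar{\Phi}$ is already determined by its restriction to $\bar{P}_{2n}^S$: since $\bar{P}_{2n}^N = g_0 \bar{P}_{2n}^S$, condition E2 with $\bar{g} = g_0$ forces $\bar{\Phi}(g_0\bar{x}) = g_0\bar{\Phi}(\bar{x})g_0^{-1} = \bar{\Phi}(\bar{x})$, so the values on the northern copy are dictated by those on the southern one, whence $q_\Omega$ is injective. For surjectivity, given $\Phi \in \Omega_F$ I define $\bar{\Phi}$ on $\bar{P}_{2n}^S$ by $\bar{\Phi}(\bar{x}) = g_0\Phi(\bar{x})$ and extend to $\bar{P}_{2n}^N$ by $\bar{\Phi}(g_0\bar{x}) = \bar{\Phi}(\bar{x})$; continuity is immediate because $\bar{P}_{2n}^S$ and $\bar{P}_{2n}^N$ lie in different components of $|\lineK_{2n}|$ and $\bar{x} \mapsto g_0\bar{x}$ is a homeomorphism. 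It then remains to verify E1 and E2 for this $\bar{\Phi}$: E2 for $g_0$ holds by construction, E2 for $g \in G_\chi$ follows on the south from E2$^\prime$ and propagates to the north by $g_0$-conjugation, and E1 relating $\bar{x}$ to $c(\bar{x})$ translates into exactly E1$^\prime$ for $\Phi$ via $\bar{\Phi}(c(\bar{x})) = \bar{\Phi}(g_0 c(\bar{x}))$ and the identification of $g_0 c(\bar{x})$ as the antipode of $\bar{x}$ in $\bar{P}_{2n}^S$. That these two assignments are mutually inverse is then clear from the defining formulas.

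The step I expect to require the most care is this last translation of the cocycle condition E1 into E1$^\prime$, since it is the only place where the two distinct involutions enter simultaneously: $c$ swaps the southern and northern copies of the equatorial polygon, while $g_0$ is the antipodal map on $S^2$, and one must keep straight that their composite $g_0 c$ is precisely the antipodal self-identification of $\bar{P}_{2n}^S$ that effects the passage to $\RP^2$. Once this identity is pinned down and the triviality of the $g_0$-action on the fiber is used consistently, both the forward map $q_\Omega$ and its inverse become routine substitutions.
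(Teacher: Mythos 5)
Your proof is correct and follows essentially the same route as the paper: a direct check that $q_\Omega$ carries E1, E2 to E1$^\prime$, E2$^\prime$, together with an explicitly constructed inverse whose value on $\bar{P}_{2n}^N$ is forced by the clutching conditions. The only cosmetic differences are that you invoke the triviality of the $g_0$-action on the fiber of $\bar{F}$ up front to collapse the computations (the paper instead carries $g_0$ formally through each identity, using only that it is a central involution), and that you extend the inverse to the northern copy via E2 for $g_0$ rather than via $c$ and E1 --- the two prescriptions agree.
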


\begin{proof}
First, we show that $q_\Omega (\bar{\Phi})$ for each $\bar{\Phi}$ is
actually contained in $\Omega_F.$ Put $\Phi = q_\Omega
(\bar{\Phi}).$ For this, we show that $\Phi$ satisfies Condition
E1$^\prime.,$ E2$^\prime.$ Condition E1$^\prime.$ is proved as
follows:
\begin{align*}
 \Phi \big( g_0 c(\bar{x}) \big) &= g_0 \bar{\Phi} \big( g_0 c(\bar{x}) \big)  \\
                                 &= \bar{\Phi} \big( c(\bar{x}) \big) g_0^{-1}  \\
                                 &= \bar{\Phi} ( \bar{x} )^{-1} g_0^{-1}  \\
                                 &= \Big( g_0^{-1} \Phi ( \bar{x} ) \Big)^{-1} g_0^{-1}  \\
                                 &= \Phi ( \bar{x} )^{-1}
\end{align*}
for $\bar{x} \in \bar{P}_{2n}^S.$ And, Condition E2$^\prime.$ is
proved as follows:
\begin{align*}
 \Phi \big( g \bar{x} \big) &= g_0 \bar{\Phi} \big( g \bar{x} \big)  \\
                            &= g_0 g \bar{\Phi} \big( \bar{x} \big) g^{-1}  \\
                            &= g g_0 \bar{\Phi} \big( \bar{x} \big) g^{-1}  \\
                            &= g \Phi \big( \bar{x} \big) g^{-1}
\end{align*}
for each $g \in G_\chi$ and $\bar{x} \in \bar{P}_{2n}^S.$ So,
$q_\Omega (\bar{\Phi})$ is contained in $\Omega_F.$

Second, we construct the inverse of $q_{\Omega}$ to show bijectivity
of it. Consider the map
\begin{equation*}
q_\Omega^{-1} : \Omega_F \longrightarrow \Omega_{\bar{F}}, \quad
\Phi \mapsto \bar{\Phi}(\bar{x}) = \left\{
  \begin{array}{ll}
    g_0^{-1} \Phi (\bar{x})
    & \hbox{for } \bar{x} \in \bar{P}_{2n}^S, \\
    \Big( g_0^{-1} \Phi \big( c(\bar{x}) \big) \Big)^{-1}
    & \hbox{for } \bar{x} \in \bar{P}_{2n}^N.
  \end{array}
\right.
\end{equation*}
We show that $\bar{\Phi} = q_\Omega^{-1} (\Phi)$ for each $\Phi$ is
actually contained in $\Omega_{\bar{F}}.$ For this, we show that
$\Phi$ satisfies Condition E1., E2. Condition E1. is proved by
definition of $\bar{\Phi}.$ We prove Condition E2. only when
$\bar{x} \in \bar{P}_{2n}^S$ and $\bar{g} = g g_0$ for some $g \in
G_\chi$ as follows:
\begin{align*}
\bar{\Phi}( \bar{g} \bar{x}) = \bar{\Phi}(g g_0 \bar{x})
&= \Big( g_0^{-1} \Phi \big( c( g g_0 \bar{x} ) \big) \Big)^{-1}  \\
&= \Phi \big( c( g g_0 \bar{x} ) \big)^{-1} g_0 \\
&= \Phi \big( g_0 c( g \bar{x} ) \big)^{-1} g_0 \\
&= \Phi \big( g \bar{x} \big) g_0 \\
&= g \Phi \big( \bar{x} \big) g^{-1} g_0 \\
&= g g_0 g_0^{-1} \Phi \big( \bar{x} \big) g^{-1} g_0 \\
&= g g_0 \bar{\Phi} \big( \bar{x} \big) g_0^{-1} g^{-1} \\
&= \bar{g} \bar{\Phi} \big( \bar{x} \big) \bar{g}^{-1}
\end{align*}
where we use equivariance of $c.$ Proof of Condition E2. for other
$\bar{x}$'s and $\bar{g}$'s is proved similarly. Since $q_\Omega$
and $q_\Omega^{-1}$ are inverses of each other, we obtain a proof.
\end{proof}

For convenience in calculation, we parameterize each $|e^i|,$
$|\bar{e}_q^i|$ in $|\complexK_{2n}|,$ $|\lineK_{2n}|$ for $0 \le i
\le 2n-1$ by $t \in [i, i+1]$ linearly to satisfy $v_S^i \mapsto i$
and $\bar{v}_S^i \mapsto i.$ Vertices $v_S^0,$ $\bar{v}_S^0$ are
parameterized by 0 or $2n$ according to context. Pick a path $\sigma
: [0,1] \rightarrow \Iso_H (W_{d^{-1}})$ such that
$\sigma(0)=\sigma(1)=\id$ and $[\sigma]$ is a generator of $\pi_1
\Big( \Iso_H (W_{d^{-1}}), \id \Big).$ For a path $\gamma$ defined
on $[0,1],$ define a path $\gamma^{tr}$ on $[0,1]$ as $\gamma^{tr}
(t) = \gamma (1-t)$ for $t \in [0,1],$ and a path $\gamma_{-i}$ on
$[i, i+1]$ as $\gamma_{-i} (t) = \gamma (t-i)$ for $i \in \Z.$ Now,
we prove Theorem B and Corollary \ref{corollary: reduction to line
bundle}.

\begin{proof}[Proof of Theorem B]
We obtain the first and second statement through the isomorphism
(\ref{equation: isomorphism RP^2 and S^2}) because the preimage
$p_{\vect}^{-1} \Big( P_1 (\mathbf{W}) \Big)$ contained in
$\Vect_{G_\chi \times Z} (S^2, \chi)$ has two elements by
\cite[Theorem B]{Ki}. In \cite[Proof of Theorem B]{Ki}, these two
are described as equivariant clutching constructions of $\bar{F}$
via equivariant clutching maps $\bar{\Phi},$ $\bar{\Phi}^\prime$
with respect to $\bar{F}$ satisfying the following:
\begin{enumerate}
  \item $\bar{\Phi} \big|_{\bar{P}_{2n}^S} \equiv \id,$
  \item $\bar{\Phi}^\prime \big|_{\bar{P}_{2n}^S} (t) =
  \left\{
    \begin{array}{ll}
      \sigma (t)             \qquad & \hbox{ for } t \in [0, 1], \\
      (\sigma^{tr})_{-1} (t) \qquad & \hbox{ for } t \in [1, 2].
    \end{array}
  \right.$
\end{enumerate}
Since $\bar{\Phi}^\prime \big|_{\bar{P}_{2n}^S} (t) = \sigma \vee
(\sigma^{tr})_{-1} ~ (t)$ for $t \in [0, 2]$ and $\bar{\Phi}^\prime$
is equivariant, we also have
\begin{equation*}
\bar{\Phi}^\prime \big|_{\bar{P}_{2n}^S} (t) =  g_1^i \Big( ~ \sigma
\vee (\sigma^{tr})_{-1} ~ (t-2i) \Big) g_1^{-i}
\end{equation*}
for an integer $0 \le i \le n-1$ and $t \in [2i, 2i+2]$ where
$\bar{\rho} (g_1) = a_n.$ Put $\Phi = q_\Omega (\bar{\Phi})$ and
$\Phi^\prime = q_\Omega (\bar{\Phi}^\prime).$ By Lemma \ref{lemma:
Omega correspondence}, they are equivariant clutching maps with
respect to $F.$ Denote by $E$ and $E^\prime$ the equivariant vector
bundles determined by $\Phi$ and $\Phi^\prime,$ respectively. It is
easy that the $E$ is equivariantly trivial. Since $\Phi^\prime$ is
equivariant, it is determined by its values on the half $[0, n]$ of
$\bar{P}_{2n}^S.$ Observe that $\Phi^\prime$ on $[1, n] \cup [n+1,
2n]$ does not contribute to $c_1 (E^\prime)$ because $n$ is odd and
the pair $g_1^i \sigma g_1^{-i}$ and $g_1^{i^\prime}
(\sigma^{tr})_{-1} g_1^{-i^\prime}$ for $0 \le i, i^\prime \le n-1$
cancel each other in the fundamental group $\pi_1 \Big( \Iso_H
(W_{d^{-1}}), \id \Big).$ In other words, $E^\prime$ is
nonequivariantly isomorphic to the nonequivariant vector bundle
$E^{\prime \prime}$ determined by the preclutching map $\Phi^{\prime
\prime}$ with respect to $F$ defined by
\begin{equation*}
\begin{array}{ll}
\Phi^{\prime \prime} (t) = \id              & \text{ for } t \in [1,
n] \cup
[n+1, 2n], \\
\Phi^{\prime \prime} (t) = \sigma(t)        & \text{ for } t \in [0,
1],
\\
\Phi^{\prime \prime} (t) = \sigma(t-n)^{-1} & \text{ for } t \in [n,
n+1].
\end{array}
\end{equation*}
Then, we can show that
\begin{equation*}
c_1(E^{\prime \prime}) \equiv \chi(\id) \mod 2
\end{equation*}
in $H^2 \big( \RP^2, \Z \big)$ by \cite[Lemma 7.1]{Ki} because
$[\sigma]$ is a generator of the fundamental group $\pi_1 \Big(
\Iso_H (W_{d^{-1}}), \id \Big).$ Therefore, we obtain a proof.
\end{proof}

\begin{proof}[Proof of Corollary \ref{corollary: reduction to line bundle}]
By \cite[Theorem D]{Ki}, $\Vect_{G_\chi \times Z} (S^2, \chi)$ has a
rank $\chi(\id)$-bundle so that $\Vect_{G_\chi} (\RP^2, \chi)$ has a
rank $\chi(\id)$-bundle by the isomorphism (\ref{equation:
isomorphism RP^2 and S^2}). From this, we have
\begin{equation*}
\Vect_{G_\chi} (\RP^2, \chi) \cong \Vect_R (\RP^2)
\end{equation*}
as semigroups by \cite[Lemma 2.2]{CKMS}. Also, we obtain
\begin{equation*}
\Vect_{\bar{R} \times Z} (S^2) \cong \Vect_R (\RP^2)
\end{equation*}
by the isomorphism (\ref{equation: isomorphism RP^2 and S^2}). It is
easy that $\Vect_R (\RP^2)$ is generated by line bundles and that
$A_R (\RP^2, \id)$ is generated by all the elements with
one-dimensional entries by \cite[Theorem D]{Ki}. Since $R_x=\langle
\id \rangle$ for each $x \in o( P_{2n} ),$ any triple $( W_{d^i}
)_{i \in I^+}$ in $\Rep \Big( R_{d^{-1}} \Big)$ $\times$ $\Rep \Big(
R_{d^0} \Big)$ $\times$ $\Rep \Big( R_{d^1} \Big)$ is contained in
$A_R (\RP^2, \id)$ if and only if $W_{d^i}$'s are same dimensional.
So, the number of all the elements in $A_R (\RP^2, \id)$ with
one-dimensional entries is equal to $n$ because $R_{d^{-1}} \cong
\Z_n$ and $R_{d^0} = R_{d^0} = \langle \id \rangle.$ Therefore, we
obtain a proof.
\end{proof}

\end{document}